\newtheorem{theorem}{Theorem}[section]
\newtheorem{lemma}[theorem]{Lemma}
\newtheorem{proposition}[theorem]{Proposition}
\theoremstyle{definition}
\newtheorem{definition}[theorem]{Definition}
\newtheorem{example}[theorem]{Example}
\numberwithin{equation}{section}
\newcommand{\be}{\begin{equation}}
\newcommand{\ee}{\end{equation}}
\newcommand{\bee}{\begin{example}}
\newcommand{\eee}{\end{example}}
\numberwithin{equation}{section}
\patchcmd{\@settitle}{\uppercasenonmath\@title}{}{}{}
\patchcmd{\@setauthors}{\MakeUppercase}{}{}{}
\begin{document}

\title[ Controlled $\ast$-K-operator frame for $End_\mathcal{A}^\ast (\mathcal{H})$]{Controlled $\ast$-K-operator frame for $End_\mathcal{A}^\ast (\mathcal{H})$}

\author[H. Labrigui, M. Rossafi, A. Touri and Nadia Assila]{Hatim Labrigui$^{1}$, Mohamed Rossafi$^{*2}$, Abdeslam Touri$^{1}$ and Nadia Assila$^{1}$}
\address{$^{1}$Department of Mathematics, Faculty of Sciences, Ibn Tofail University, Kenitra, Morocco}
\email{hlabrigui75@gmail.com; touri.abdo68@gmail.com; nadia.assila@uit.ac.ma}
\address{$^{2}$LASMA Laboratory Department of Mathematics, Faculty of Sciences Dhar El Mahraz, University Sidi Mohamed Ben Abdellah, Fes, Morocco}
\email{\textcolor[rgb]{0.00,0.00,0.84}{rossafimohamed@gmail.com; mohamed.rossafi@usmba.ac.ma}}

%---------------------------------------------------------------

\subjclass[2010]{42C15, 41A58}

%---------------------------------------------------------------

\keywords{$\ast$-K-Operator Frame, Controlled $\ast$-K-operator frame,    $C^{\ast}$-algebra, Hilbert $C^{\ast}$-modules.\\
\indent $^{*}$ Corresponding author}
\maketitle

\begin{abstract}
	Frame Theory has a great revolution for recent years. This theory has been extended from Hilbert spaces to Hilbert  $C^{\ast}$-modules. In this paper, we introduce the concept of Controlled $\ast$-$K$-operator frame for the space $End_{\mathcal{A}}^{\ast}(\mathcal{H})$ of all adjointable operators on a Hilbert $\mathcal{A}$-module $\mathcal{H}$ and we establish some results.

\end{abstract}
\maketitle
\vspace{0.1in}

\section{\textbf{Introduction and preliminaries}}
In 1946, Gabor \cite{a} introduced a method for reconstructing functions (signals) using a family of elementary functions. Later in 1952, Duffin and Schaeffer \cite{Duf} presented a similar tool in the context of nonharmonic Fourier series and this is the starting point of frame theory. After some decades, Daubechies, Grossmann and Meyer \cite{13} in 1986 announced formally the definition of frame in the abstract Hilbert spaces. 

 Controlled frames in Hilbert spaces have been introduced by P. Balazs \cite{01} to improve the numerical efficiency of iterative algorithms for inverting the frame operator.
 
 In this paper, we define and study the notion of controlled $\ast$-$K$-operator frames for $End_{\mathcal{A}}^{\ast}(\mathcal{H})$, to consider the relation between $\ast$-$K$-operator frames and controlled $\ast$-$K$-operator frames in a given Hilbert $C^{\ast}$-module and we establish some results.
 
In the following, we briefly recall the definitions and basic properties of $C^{\ast}$-algebra, Hilbert $\mathcal{A}$-modules. Our references for $C^{\ast}$-algebras as \cite{{Dav},{Con}}. For a $C^{\ast}$-algebra $\mathcal{A}$ if $a\in\mathcal{A}$ is positive we write $a\geq 0$ and $\mathcal{A}^{+}$ denotes the set of all positive elements of $\mathcal{A}$.
\begin{definition}\cite{Pas}	
	Let $ \mathcal{A} $ be a unital $C^{\ast}$-algebra and $\mathcal{H}$ be a left $ \mathcal{A} $-module, such that the linear structures of $\mathcal{A}$ and $ \mathcal{H} $ are compatible. $\mathcal{H}$ is a pre-Hilbert $\mathcal{A}$-module if $\mathcal{H}$ is equipped with an $\mathcal{A}$-valued inner product $\langle.,.\rangle_{\mathcal{A}} :\mathcal{H}\times\mathcal{H}\rightarrow\mathcal{A}$, such that is sesquilinear, positive definite and respects the module action. In the other words,
	\begin{itemize}
		\item [(i)] $ \langle x,x\rangle_{\mathcal{A}}\geq0 $ for all $ x\in\mathcal{H} $ and $ \langle x,x\rangle_{\mathcal{A}}=0$ if and only if $x=0$.
		\item [(ii)] $\langle ax+y,z\rangle_{\mathcal{A}}=a\langle x,z\rangle_{\mathcal{A}}+\langle y,z\rangle_{\mathcal{A}}$ for all $a\in\mathcal{A}$ and $x,y,z\in\mathcal{H}$.
		\item[(iii)] $ \langle x,y\rangle_{\mathcal{A}}=\langle y,x\rangle_{\mathcal{A}}^{\ast} $ for all $x,y\in\mathcal{H}$.
	\end{itemize}	 
	For $x\in\mathcal{H}, $ we define $||x||=||\langle x,x\rangle_{\mathcal{A}}||^{\frac{1}{2}}$. If $\mathcal{H}$ is complete with $||.||$, it is called a Hilbert $\mathcal{A}$-module or a Hilbert $C^{\ast}$-module over $\mathcal{A}$. For every $a$ in $C^{\ast}$-algebra $\mathcal{A}$, we have $|a|=(a^{\ast}a)^{\frac{1}{2}}$ and the $\mathcal{A}$-valued norm on $\mathcal{H}$ is defined by $|x|=\langle x, x\rangle_{\mathcal{A}}^{\frac{1}{2}}$ for $x\in\mathcal{H}$.
\end{definition} 	
	Let $\mathcal{H}$ and $\mathcal{K}$ be two Hilbert $\mathcal{A}$-modules, A map $T:\mathcal{H}\rightarrow\mathcal{K}$ is said to be adjointable if there exists a map $T^{\ast}:\mathcal{K}\rightarrow\mathcal{H}$ such that $\langle Tx,y\rangle_{\mathcal{A}}=\langle x,T^{\ast}y\rangle_{\mathcal{A}}$ for all $x\in\mathcal{H}$ and $y\in\mathcal{K}$.
	
We reserve the notation $End_{\mathcal{A}}^{\ast}(\mathcal{H},\mathcal{K})$ for the set of all adjointable operators from $\mathcal{H}$ to $\mathcal{K}$ and $End_{\mathcal{A}}^{\ast}(\mathcal{H},\mathcal{H})$ is abbreviated to $End_{\mathcal{A}}^{\ast}(\mathcal{H})$. Also, $GL^{+}(\mathcal{H})$ is the set of all positive bounded linear invertible operators on $\mathcal{H}$ with bounded inverse.
 
The following lemmas will be used to prove our mains results.
\begin{lemma} \label{111} \cite{Ara}.
	Let $\mathcal{H}$ and $\mathcal{K}$ be two Hilbert $\mathcal{A}$-modules and $T\in End_{\mathcal{A}}^{\ast}(\mathcal{H},\mathcal{K})$. Then the following statements are equivalent:
	\begin{itemize}
		\item [(i)] $T$ is surjective.
		\item [(ii)] $T^{\ast}$ is bounded below with respect to norm, i.e.: there is $m>0$ such that $\|T^{\ast}x\|\geq m\|x\|$, $x\in\mathcal{K}$.
		\item [(iii)] $T^{\ast}$ is bounded below with respect to the inner product, i.e.: there is $m'>0$ such that, 
		\begin{equation*}
	\langle T^{\ast}x,T^{\ast}x\rangle_\mathcal{A}\geq m'\langle x,x\rangle_\mathcal{A}, \quad  x\in\mathcal{K}.
	\end{equation*}
	\end{itemize}
	\end{lemma}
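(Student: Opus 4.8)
The plan is to close the loop $(iii)\Rightarrow(i)\Rightarrow(ii)\Rightarrow(iii)$, with the substance concentrated in the study of the positive adjointable operator $b:=TT^{\ast}$ viewed as an element of the unital $C^{\ast}$-algebra $\mathcal{B}:=End_{\mathcal{A}}^{\ast}(\mathcal{K})$. Two elementary facts will be used throughout: for $w$ in a Hilbert $\mathcal{A}$-module, $\|w\|=\sup\{\|\langle w,x\rangle_{\mathcal{A}}\|:\ x\in\mathcal{H},\ \|x\|\leq 1\}$ (immediate from Cauchy--Schwarz together with $\|w\|^{2}=\|\langle w,w\rangle_{\mathcal{A}}\|$), and for $0\leq a\leq c$ in a $C^{\ast}$-algebra one has $\|a\|\leq\|c\|$. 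I also invoke the standard characterization that $A\in\mathcal{B}$ is positive in $\mathcal{B}$ exactly when $\langle Ax,x\rangle_{\mathcal{A}}\geq 0$ for all $x\in\mathcal{K}$, so that the identity $\langle T^{\ast}z,T^{\ast}z\rangle_{\mathcal{A}}=\langle bz,z\rangle_{\mathcal{A}}$ translates the inner-product estimate in (iii) into an order inequality for $b$ inside $\mathcal{B}$.

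For $(iii)\Rightarrow(i)$ I would read (iii) as $b-m'\,\id\geq 0$ in $\mathcal{B}$, whence $\sigma(b)\subseteq[m',\|b\|]$ and $b$ is invertible in $\mathcal{B}$; then $T\bigl(T^{\ast}b^{-1}\bigr)=\id_{\mathcal{K}}$ exhibits a right inverse of $T$, so $T$ is surjective. For $(i)\Rightarrow(ii)$ I would use that a bounded linear surjection between Banach spaces is open, which yields $c>0$ with $\{y\in\mathcal{K}:\|y\|\leq c\}\subseteq T(\{x\in\mathcal{H}:\|x\|\leq 1\})$; combining $\langle T^{\ast}z,x\rangle_{\mathcal{A}}=\langle z,Tx\rangle_{\mathcal{A}}$ with the norm formula gives $\|T^{\ast}z\|=\sup_{\|x\|\leq 1}\|\langle z,Tx\rangle_{\mathcal{A}}\|\geq\sup_{\|y\|\leq c}\|\langle z,y\rangle_{\mathcal{A}}\|=c\|z\|$, i.e. (ii) with $m=c$. (The implication $(iii)\Rightarrow(ii)$ is in any case immediate from $\|T^{\ast}z\|^{2}=\|\langle T^{\ast}z,T^{\ast}z\rangle_{\mathcal{A}}\|\geq\|m'\langle z,z\rangle_{\mathcal{A}}\|=m'\|z\|^{2}$.)

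The remaining implication $(ii)\Rightarrow(iii)$ is where the real work sits and is the step I expect to be the main obstacle: in a Hilbert $C^{\ast}$-module a mere norm lower bound for $T^{\ast}$ does not a priori give the order bound on inner products, since closed submodules need not be orthogonally complemented and $\mathcal{B}$ carries no spectral projections. My approach is to prove directly that $b=TT^{\ast}$ is invertible in $\mathcal{B}$ and then conclude exactly as in $(iii)\Rightarrow(i)$. Rewriting (ii) as $\|\langle bz,z\rangle_{\mathcal{A}}\|\geq m^{2}\|z\|^{2}$ for all $z$, suppose $b$ is not invertible, so $0\in\sigma(b)$; choose $0<\delta<m^{2}/4$ and a continuous $\vphi:[0,\|b\|]\to[0,1]$ with $\vphi(0)=1$ and $\vphi\equiv 0$ on $[\delta,\|b\|]$. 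Then $\|\vphi(b)\|=1$ in $\mathcal{B}$, so there is $z$ with $\|z\|\leq 1$ and $\|\vphi(b)z\|\geq\tfrac12$; put $w=\vphi(b)z$ and use $t\vphi(t)^{2}\leq\delta$ on $\sigma(b)$, i.e. $0\leq\vphi(b)b\vphi(b)\leq\delta\,\id$, to get $\|\langle bw,w\rangle_{\mathcal{A}}\|=\|\langle\vphi(b)b\vphi(b)z,z\rangle_{\mathcal{A}}\|\leq\delta<m^{2}/4\leq m^{2}\|w\|^{2}$, contradicting (ii). Hence $b$ is invertible, $b\geq\|b^{-1}\|^{-1}\,\id$ in $\mathcal{B}$, and $\langle T^{\ast}z,T^{\ast}z\rangle_{\mathcal{A}}\geq\|b^{-1}\|^{-1}\langle z,z\rangle_{\mathcal{A}}$, which is (iii). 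An alternative to this functional-calculus argument is to invoke the closed-range theorem for adjointable operators on Hilbert $C^{\ast}$-modules, via which $T^{\ast}$ bounded below yields $\ran T=(\ker T^{\ast})^{\perp}=\mathcal{K}$ and hence that $TT^{\ast}$ is a positive bijection, again invertible in $\mathcal{B}$; either way the module-theoretic subtleties are confined to this single step, and everything else is routine.
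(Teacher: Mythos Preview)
The paper does not supply its own proof of this lemma; it is stated as a preliminary result quoted from Aramba\v{s}i\'{c} and used without argument, so there is nothing in the paper to compare your approach against.

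Your proof is correct. The cycle $(iii)\Rightarrow(i)\Rightarrow(ii)\Rightarrow(iii)$ is cleanly executed: reading $(iii)$ as $b=TT^{\ast}\geq m'\,\id$ in the $C^{\ast}$-algebra $End_{\mathcal{A}}^{\ast}(\mathcal{K})$ and producing the right inverse $T^{\ast}b^{-1}$ is the standard route to $(i)$; the open-mapping argument combined with the dual norm formula $\|w\|=\sup_{\|x\|\leq 1}\|\langle w,x\rangle_{\mathcal{A}}\|$ handles $(i)\Rightarrow(ii)$; and for the genuinely nontrivial step $(ii)\Rightarrow(iii)$ your functional-calculus construction---using a bump function $\vphi$ supported near $0\in\sigma(b)$ to build a vector $w=\vphi(b)z$ with $\|w\|\geq\tfrac12$ but $\|\langle bw,w\rangle_{\mathcal{A}}\|\leq\delta$---correctly forces invertibility of $b$ and hence $b\geq\|b^{-1}\|^{-1}\id$. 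The auxiliary facts you invoke (positivity of $A\in End_{\mathcal{A}}^{\ast}(\mathcal{K})$ is equivalent to $\langle Ax,x\rangle_{\mathcal{A}}\geq 0$ for all $x$; monotonicity of the $C^{\ast}$-norm on positive elements) are all standard for Hilbert $C^{\ast}$-modules.
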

\begin{lemma} \label{1} \cite{Pas}.
	Let $\mathcal{H}$ be an Hilbert $\mathcal{A}$-module. If $T\in End_{\mathcal{A}}^{\ast}(\mathcal{H})$, then $$\langle Tx,Tx\rangle_{\mathcal{A}}\leq\|T\|^{2}\langle x,x\rangle_{\mathcal{A}}, \qquad x\in\mathcal{H}.$$
\end{lemma}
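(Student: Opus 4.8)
The plan is to reduce the claimed $\mathcal{A}$-valued inequality to a standard operator inequality inside the $C^{\ast}$-algebra $End_{\mathcal{A}}^{\ast}(\mathcal{H})$, which is unital with unit $\id$. First I would use adjointability to rewrite the left-hand side: for every $x\in\mathcal{H}$,
\[
\langle Tx,Tx\rangle_{\mathcal{A}}=\langle T^{\ast}Tx,x\rangle_{\mathcal{A}},
\]
so it suffices to compare the positive operator $T^{\ast}T$ with $\|T\|^{2}\id$.

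Next I would invoke the $C^{\ast}$-structure of $End_{\mathcal{A}}^{\ast}(\mathcal{H})$. Since $T^{\ast}T\geq 0$ and $\|T^{\ast}T\|=\|T\|^{2}$ by the $C^{\ast}$-identity, the continuous functional calculus (equivalently, the spectral bound $\sigma(T^{\ast}T)\subseteq[0,\|T\|^{2}]$) yields $0\leq T^{\ast}T\leq\|T\|^{2}\id$ in $End_{\mathcal{A}}^{\ast}(\mathcal{H})$. In particular $\|T\|^{2}\id-T^{\ast}T$ is positive, hence of the form $S^{\ast}S$ for some $S\in End_{\mathcal{A}}^{\ast}(\mathcal{H})$ (take $S=(\|T\|^{2}\id-T^{\ast}T)^{1/2}$).

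Finally I would translate this operator positivity back into the module: for any $x\in\mathcal{H}$,
\[
\langle(\|T\|^{2}\id-T^{\ast}T)x,x\rangle_{\mathcal{A}}=\langle S^{\ast}Sx,x\rangle_{\mathcal{A}}=\langle Sx,Sx\rangle_{\mathcal{A}}\geq 0
\]
by positivity of the $\mathcal{A}$-valued inner product. Rearranging gives $\langle T^{\ast}Tx,x\rangle_{\mathcal{A}}\leq\|T\|^{2}\langle x,x\rangle_{\mathcal{A}}$, and combining with the first display proves the lemma. The only genuinely nontrivial ingredient is the fact that a positive element of a unital $C^{\ast}$-algebra is dominated in order by its norm times the unit, i.e. the use of the functional calculus/spectral radius in $End_{\mathcal{A}}^{\ast}(\mathcal{H})$; everything else is bookkeeping with the inner-product axioms. (An alternative route would be to apply the Cauchy--Schwarz inequality for Hilbert $C^{\ast}$-modules directly, but the argument above is the most transparent.)
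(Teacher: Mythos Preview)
Your argument is correct: reducing to the operator inequality $T^{\ast}T\leq\|T\|^{2}\id$ in the unital $C^{\ast}$-algebra $End_{\mathcal{A}}^{\ast}(\mathcal{H})$, extracting a positive square root $S$ of $\|T\|^{2}\id-T^{\ast}T$, and then reading off $\langle Sx,Sx\rangle_{\mathcal{A}}\geq 0$ is exactly the standard route, and every step is justified (note $\id_{\mathcal{H}}$ is adjointable, so the algebra is indeed unital and the spectral bound $0\leq a\leq\|a\|\,\id$ for positive $a$ applies).

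There is, however, nothing to compare against: the paper does not supply its own proof of this lemma. It is quoted from Paschke \cite{Pas} as a preliminary result and used as a black box in the later propositions. So your proof is not an alternative to the paper's argument but a fill-in for a cited fact; as such it is complete and self-contained.
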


For the following theorem, $R(T)$ denote the range of the operator $T$.
\begin{theorem}\label{3} \cite{Do} 
	Let E, F and G be Hilbert ${\mathcal{A}}$-modules over a $C^{\ast}$-algebra $\mathcal{A}$. Let $ T\in End_{\mathcal{A}}^{\ast}(E,F) $ and  $  T'\in End_{\mathcal{A}}^{\ast}(G,F) $ with 
    $\overline{({R(T^{\ast})})}$ is orthogonally complemented. Then the following statements are equivalent:
	\begin{itemize}
		\item [(1)] $T'(T')^{\ast} \leq \lambda TT^{\ast}$ for some $\lambda >0$.
		\item  [(2)]	There exists $\mu >0 $ such that $\|(T')^{\ast}x\|\leq \mu \|T^{\ast}x\|$ for all $x\in F$.
		\item  [(3)] There exists $ D\in End_{\mathcal{A}}^{\ast}(G,E)$ such that $T'=TD$,
		that is the equation $TX=T'$ has a solution.
		\item  [(4)]  $R(T')\subseteq R(T)$.
		
	\end{itemize}
\end{theorem}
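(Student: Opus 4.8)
The plan is to prove the four conditions equivalent through the implications $(1)\Rightarrow(2)$, $(2)\Rightarrow(3)$, $(3)\Rightarrow(1)$, $(3)\Rightarrow(4)$ and $(4)\Rightarrow(3)$. The hypothesis that $\overline{R(T^{\ast})}$ is orthogonally complemented is the device that lets one manufacture an \emph{adjointable} operator out of a norm estimate, and it will be invoked in $(2)\Rightarrow(3)$ and $(4)\Rightarrow(3)$; the other implications are soft.

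\textbf{The soft implications.} For $(1)\Rightarrow(2)$ I would combine the $C^{\ast}$-module identity $\|z\|^{2}=\|\langle z,z\rangle_{\mathcal{A}}\|$ with $\langle T'(T')^{\ast}x,x\rangle_{\mathcal{A}}=\langle (T')^{\ast}x,(T')^{\ast}x\rangle_{\mathcal{A}}$ and the analogous identity for $T$: from $0\le T'(T')^{\ast}\le\lambda\,TT^{\ast}$ one gets $0\le\langle (T')^{\ast}x,(T')^{\ast}x\rangle_{\mathcal{A}}\le\lambda\,\langle T^{\ast}x,T^{\ast}x\rangle_{\mathcal{A}}$ in $\mathcal{A}^{+}$ for every $x\in F$, and monotonicity of the norm on $\mathcal{A}^{+}$ yields $\|(T')^{\ast}x\|\le\sqrt{\lambda}\,\|T^{\ast}x\|$, so $(2)$ holds with $\mu=\sqrt{\lambda}$. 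For $(3)\Rightarrow(1)$: if $T'=TD$ with $D\in End_{\mathcal{A}}^{\ast}(G,E)$ then $T'(T')^{\ast}=TDD^{\ast}T^{\ast}$; the standard estimate $DD^{\ast}\le\|D\|^{2}\,\id$ (Lemma \ref{1} applied to $D^{\ast}$) together with the fact that conjugation by $T$ preserves the order of positive operators gives $T'(T')^{\ast}\le\|D\|^{2}\,TT^{\ast}$, which is $(1)$. Finally $(3)\Rightarrow(4)$ is immediate since $R(TD)\subseteq R(T)$.

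\textbf{The factorization $(2)\Rightarrow(3)$.} This is the Hilbert $C^{\ast}$-module version of Douglas' theorem and the heart of the matter. On the submodule $R(T^{\ast})\subseteq E$ define $C_{0}(T^{\ast}y)=(T')^{\ast}y$ for $y\in F$. The estimate in $(2)$ shows $C_{0}$ is well defined (if $T^{\ast}y=0$ then $\|(T')^{\ast}y\|\le\mu\|T^{\ast}y\|=0$), $\mathcal{A}$-linear, and bounded with $\|C_{0}\|\le\mu$, hence extends by continuity to a bounded $\mathcal{A}$-linear map $\overline{C_{0}}\colon\overline{R(T^{\ast})}\to G$. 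Let $P\in End_{\mathcal{A}}^{\ast}(E)$ be the orthogonal projection onto $\overline{R(T^{\ast})}$ — this is where the hypothesis enters — and set $C=\overline{C_{0}}\,P\colon E\to G$. The delicate point, which I expect to be the main obstacle, is that $C$ is \emph{adjointable}: a bounded $\mathcal{A}$-linear map of Hilbert $C^{\ast}$-modules need not admit an adjoint, and it is precisely the orthogonal complementedness of $\overline{R(T^{\ast})}$ that saves the day. Granting it, put $D=C^{\ast}\in End_{\mathcal{A}}^{\ast}(G,E)$; then for all $y\in F$ and $g\in G$,
\[
\langle y,T'g\rangle_{\mathcal{A}}=\langle (T')^{\ast}y,g\rangle_{\mathcal{A}}=\langle C(T^{\ast}y),g\rangle_{\mathcal{A}}=\langle T^{\ast}y,Dg\rangle_{\mathcal{A}}=\langle y,TDg\rangle_{\mathcal{A}},
\]
so $T'g=TDg$ for all $g$, that is $T'=TD$, which is $(3)$.

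\textbf{The closed graph step $(4)\Rightarrow(3)$.} Since $\overline{R(T^{\ast})}$ is orthogonally complemented and $(\overline{R(T^{\ast})})^{\perp}=\ker T$, we have $E=\overline{R(T^{\ast})}\oplus\ker T$, so each $g\in G$ — for which $T'g\in R(T')\subseteq R(T)$ by $(4)$ — has a unique preimage $Dg\in\overline{R(T^{\ast})}$ with $T(Dg)=T'g$. Uniqueness forces $D\colon G\to E$ to be $\mathcal{A}$-linear, and the closed graph theorem forces it to be bounded: if $g_{n}\to g$ and $Dg_{n}\to e$ then $Te=\lim T(Dg_{n})=\lim T'g_{n}=T'g$ with $e\in\overline{R(T^{\ast})}$, hence $e=Dg$. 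Adjointability of $D$ is obtained as in the previous step, or one re-enters at $(3)$ having produced $T'=TD$ and invokes $(3)\Rightarrow(1)$ to close the cycle. Thus the entire content is the construction of an \emph{adjointable} solution of the equation $TX=T'$, and that is exactly where the hypothesis on $\overline{R(T^{\ast})}$ is consumed.
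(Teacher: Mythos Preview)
The paper does not prove this statement; it is quoted in the preliminaries with a citation to \cite{Do}, so there is no in-paper proof to compare against.

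As for the sketch itself: the soft implications $(1)\Rightarrow(2)$, $(3)\Rightarrow(1)$ and $(3)\Rightarrow(4)$ are fine, and the overall template is the right one. The genuine gap is exactly the step you flag and then \emph{grant} in $(2)\Rightarrow(3)$: the adjointability of $C=\overline{C_{0}}\,P$. In a Hilbert space every bounded operator is adjointable and Douglas' original argument closes at once; in a Hilbert $C^{\ast}$-module a bounded $\mathcal A$-linear map need not admit an adjoint, and proving that this particular $C$ does is the whole substance of the theorem in the module setting --- it cannot simply be assumed. Indeed, unwinding what an adjoint $D=C^{\ast}$ would have to satisfy (take $e=T^{\ast}y$) gives $\langle y,T'g\rangle_{\mathcal A}=\langle y,T(Dg)\rangle_{\mathcal A}$, so one is implicitly demanding $T'g\in R(T)$ for every $g$, i.e.\ condition $(4)$, which has not been derived from $(2)$. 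Your $(4)\Rightarrow(3)$ inherits the same defect: the closed-graph step produces a bounded $\mathcal A$-linear $D$ with $TD=T'$, but ``adjointability as in the previous step'' was never actually proved there, and ``re-entering at $(3)$ and invoking $(3)\Rightarrow(1)$'' is circular, since $(3)$ already requires $D\in End_{\mathcal A}^{\ast}(G,E)$ and your argument for $(3)\Rightarrow(1)$ uses $DD^{\ast}$. Closing this gap needs a separate, nontrivial argument; that is precisely what the cited reference supplies and what your outline leaves out.
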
	

\section{\textbf{Controlled $\ast$-$K$-operator frame for $End_\mathcal{A}^\ast (\mathcal{H})$}}
We begin this section with the following definition.

\begin{definition}
	Let $K \in End_{\mathcal{A}}^{\ast}(\mathcal{H})$ and  $C,C^{'} \in GL^{+}(\mathcal{H})$. A family of adjointable operators $\{T_{i}\}_{i\in I}$ on a Hilbert $\mathcal{A}$-module $\mathcal{H}$ over a unital $C^{\ast}$-algebra is said to be a $(C,C^{'})$-controlled $\ast$-K-operator frame for $End_{\mathcal{A}}^{\ast}(\mathcal{H})$, if there exist two strictly non zero elements $A$ and $B$ in $\mathcal{A}$ such that
	\begin{equation}\label{1}
	A\langle K^{\ast}x,K^{\ast}x\rangle_{\mathcal{A}} A^{\ast} \leq\sum_{i\in I}\langle T_{i}Cx,T_{i}C^{'}x\rangle_{\mathcal{A}}\leq B\langle x,x\rangle_{\mathcal{A}}B^{\ast},  x\in\mathcal{H}.
	\end{equation}
	The elements $A$ and $B$ are called respectively lower and upper bounds of the $(C,C^{'})$-controlled $\ast$-K-operator frame.\\
	If $A\langle K^{\ast}x,K^{\ast}x\rangle_{\mathcal{A}} A^{\ast} =\sum_{i\in I}\langle T_{i}Cx,T_{i}C^{'}x\rangle_{\mathcal{A}}$, the $(C,C^{'})$-controlled $\ast$-K-operator frame  is called $A$-tight.\\
	If $A = 1_{\mathcal{A}}$, it is called a normalized tight $(C,C^{'})$-controlled $\ast$-K-operator frame or a Parseval $(C,C^{'})$-controlled $\ast$-K-operator frame.
\end{definition}
\begin{example}
	Let $\mathcal{H}=l_{2}(\mathbb{C})=\bigg\{\{a_{n}\}_{n\in \mathbb{N}} \subset \mathbb{C} ; \; \sum_{n\in\mathbb{N}}|a_{n}|^{2}<\infty\bigg\}$ be a Hilbert space with respect the inner product,
	\begin{equation*}
	\langle \{a_{n}\}_{n\in \mathbb{N}},\{b_{n}\}_{n\in \mathbb{N}}\rangle =\sum_{n\in \mathbb{N}}a_{n}\bar{b_{n}},
	\end{equation*}
	equiped with the norm,
	\begin{equation*}
	\|\{a_{n}\}_{n\in \mathbb{N}}\|_{l_{2}(\mathbb{C})}=(\sum_{n\in \mathbb{N}}|a_{n}|^{2})^{\frac{1}{2}}.
	\end{equation*}
	We consider the $\mathbb{C}^{\ast}$-algebra $\mathcal{A}=\bigg\{\{a_{n}\}_{n\in \mathbb{N}}\subset \mathbb{C}; \; \underset{n \in \mathbb{N}}{max}|a_{n}|<\infty \bigg\}$, equiped with the involution,
	\begin{equation*}
	\{a_{n}\}_{n\in \mathbb{N}} \longrightarrow \{a_{n}\}^{\ast}_{n\in \mathbb{N}}=\{\bar{a_{n}}\}_{n\in \mathbb{N}}\quad for\; all \quad \{a_{n}\}_{n\in \mathbb{N}}\subset \mathcal{A}
	\end{equation*}
	Let the map,
	\begin{align*}
	\mathcal{H} \times \mathcal{H} &\longrightarrow \mathcal{A}\\
	(\{a_{n}\}_{n\in \mathbb{N}},\{b_{n}\}_{n\in \mathbb{N}})&\longrightarrow \langle \{a_{n}\}_{n\in \mathbb{N}},\{b_{n}\}_{n\in \mathbb{N}}\rangle_{\mathcal{A}}= \bigg\{\frac{a_{n}\overline{b_{n}}}{n}\bigg\}_{n\in \mathbb{N}}
	\end{align*}
	Wich is an inner product and equiped with it, $\mathcal{H}$ is a Hilbert $\mathcal{A}$-modules.\\
	Let
	\begin{align*}
	\Lambda_{k}:\mathcal{H}&\longrightarrow \mathcal{H}\\
	\{a_{n}\}_{n\in \mathbb{N}}&\longrightarrow \bigg\{\frac{a_{n}\delta_{n}^{2k+1}}{\sqrt{2k+1}}\bigg\}_{n\in \mathbb{N}},
	\end{align*}
	where $\delta_{n}^{2k+1}$ is the Kronecker symbol.\\
	$\Lambda_{k}$ is defined as follow,
	\begin{equation*}
	\Lambda_{k}(\{a_{n}\}_{n\in \mathbb{N}})=(0,0,...,\frac{a_{2k+1}}{\sqrt{2k+1}},0,...)\quad For\; all \quad \{a_{n}\}_{n\in \mathbb{N}} \subset \mathcal{H}.\\
	\end{equation*}
	
	Hence the family $\{\Lambda_{k}\}_{k\in \mathbb{N}}$ is bounded and linear.\\
	For $\alpha , \beta > 0$, we define,
	\begin{align*}
	C:\mathcal{H} &\longrightarrow \mathcal{H}\\
	\{a_{n}\}_{n\in \mathbb{N}}&\longrightarrow \{\alpha a_{n}\}_{n\in \mathbb{N}},
	\end{align*}
	and 
	\begin{align*}
	C^{'}:\mathcal{H} &\longrightarrow \mathcal{H}\\
	\{a_{n}\}_{n\in \mathbb{N}}&\longrightarrow \{\beta a_{n}\}_{n\in \mathbb{N}},
	\end{align*}

	It's easy to see that $C$ and $C^{'}$ are bounded, linear and invertible operators.\\
	Furthermore, 
	\begin{align*}
	\sum_{k\in \mathbb{N}}\langle \Lambda_{k}C(\{a_{n}\}_{n\in \mathbb{N}}),\Lambda_{k}C^{'}(\{a_{n}\}_{n\in \mathbb{N}}) \rangle_{\mathcal{A}}&=\sum_{k\in \mathbb{N}}\langle \Lambda_{k}(\{\alpha a_{n}\}_{n\in \mathbb{N}}),\Lambda_{k}(\{\beta a_{n}\}_{n\in \mathbb{N}}) \rangle_{\mathcal{A}}\\
	&=\sum_{k\in \mathbb{N}}\langle \bigg\{\frac{\alpha a_{n}\delta_{n}^{2k+1}}{\sqrt{2k+1}}\bigg\}_{n\in \mathbb{N}},\bigg\{\frac{\beta a_{n}\delta_{n}^{2k+1}}{\sqrt{2k+1}}\bigg\}_{n\in \mathbb{N}}\rangle_{\mathcal{A}}\\
	&=\sum_{k\in \mathbb{N}} \alpha \beta \bigg\{\frac{| a_{n}|^{2}\delta_{n}^{2k+1}}{(2k+1)n}\bigg
	\}_{n\in \mathbb{N}}\\
	&=\alpha \beta \bigg\{\frac{|a_{2k+1}|^{2}}{(2k+1)^{2}}\bigg\}_{k\in \mathbb{N}}\\
		\end{align*}
		Then the family $\{\Lambda_{k}\}_{k\in \mathbb{N}}$ is a $\ast$-$(C,C^{'})$-controlled Bessel sequence, but is not a $(C,C^{'})$-controlled operator frame.\\
		Indeed,
		\begin{align*}
	\sum_{k\in \mathbb{N}}\langle \Lambda_{k}C\{a_{n}\}_{n\in \mathbb{N}},\Lambda_{k}C^{'}\{a_{n}\}_{n\in \mathbb{N}}\rangle_{\mathbb{A}}&\leq \alpha \beta \bigg\{\frac{|a_{2k+1}|^{2}}{(2k+1)}\bigg\}_{k\in \mathbb{N}}\\
	&\leq \alpha \beta \bigg\{\frac{|a_{k}|^{2}}{k}\bigg\}_{k\in \mathbb{N}}\\
	&=\alpha\beta \langle \{a_{n}\}_{n\in \mathbb{N}},\{a_{n}\}_{n\in \mathbb{N}}\rangle_{\mathbb{A}}\\
	&=\sqrt{\alpha\beta}Id_{\mathbb{A}}\langle \{a_{n}\}_{n\in \mathbb{N}},\{a_{n}\}_{n\in \mathbb{N}}\rangle_{\mathbb{A}}\sqrt{\alpha\beta}Id_{\mathcal{A}}
		\end{align*}
		But if $\{a_{n}\}_{n\in \mathbb{N}}\in \mathcal{H}$ and $a_{k+1}=0 \quad \forall k\in \mathbb{N}$, then
		\begin{equation*}
	\langle \Lambda_{k}\{a_{n}\}_{n\in \mathbb{N}},\Lambda_{k}\{a_{n}\}_{n\in \mathbb{N}}\rangle_{\mathcal{A}}=\{0\}
		\end{equation*}
	So, we consider the operator $K$ defined by,
	\begin{align*}
	K:\mathcal{H}&\longrightarrow \mathcal{H}\\
	\{a_{n}\}_{n\in \mathbb{N}}&\longrightarrow \{a_{2k+1}\}_{k\in \mathbb{N}}
\end{align*}
and $A=\sqrt{\alpha\beta}\bigg\{\frac{1}{\sqrt{k}}\bigg\}_{k\in \mathbb{N}^{\ast}}$, then,
\begin{align*}
A\langle K^{\ast} \{a_{n}\}_{n\in \mathbb{N}},K^{\ast} \{a_{n}\}_{n\in \mathbb{N}}\rangle_{\mathcal{A}}A^{\ast}=&\sqrt{\alpha\beta}\bigg\{\frac{1}{\sqrt{k}}\bigg\}_{k\in \mathbb{N}^{\ast}}\bigg\{\frac{|a_{2k+1}|^{2}}{2k+1}\bigg\}_{k\in \mathbb{N}}\sqrt{\alpha\beta}\bigg\{\frac{1}{\sqrt{k}}\bigg\}_{k\in \mathbb{N}^{\ast}}\\
&=\alpha\beta\bigg\{\frac{|a_{2k+1}|^{2}}{(2k+1)^{2}}\bigg\}_{k\in \mathbb{N}}\\
&=\sum_{k\in \mathbb{N}}\langle \Lambda_{k}C\{a_{n}\}_{n\in \mathbb{N}},\Lambda_{k}C^{'}\{a_{n}\}_{n\in \mathbb{N}}\rangle_{\mathbb{A}}
\end{align*}
Wich proof that $\{\Lambda_{k}\}_{k\in \mathbb{N}}$ is a tight $(C,C^{'})$-$\ast$-controlled operator frame.
\end{example}
\begin{proposition}
	Every $(C,C^{'})$-controlled $\ast$-operator frame for $End_{\mathcal{A}}^{\ast}(\mathcal{H})$ is a $(C,C^{'})$-controlled $\ast$-K-operator frame for $End_{\mathcal{A}}^{\ast}(\mathcal{H})$.
\end{proposition}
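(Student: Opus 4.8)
The plan is to read off the two required estimates from the defining inequalities of the given controlled $\ast$-operator frame, the upper bound being immediate and the lower bound being a one-line consequence of Lemma \ref{1}. Recall that a $(C,C')$-controlled $\ast$-operator frame for $End_{\mathcal{A}}^{\ast}(\mathcal{H})$ is precisely the case $K=\id_{\mathcal{H}}$ of the definition above, i.e. there are strictly nonzero $A,B\in\mathcal{A}$ with
\[
A\langle x,x\rangle_{\mathcal{A}}A^{\ast}\leq\sum_{i\in I}\langle T_{i}Cx,T_{i}C'x\rangle_{\mathcal{A}}\leq B\langle x,x\rangle_{\mathcal{A}}B^{\ast},\qquad x\in\mathcal{H}.
\]
The right-hand inequality already has the form demanded by \eqref{1}, with the same upper bound $B$, so the whole task reduces to exhibiting a strictly nonzero element $A'\in\mathcal{A}$ with $A'\langle K^{\ast}x,K^{\ast}x\rangle_{\mathcal{A}}(A')^{\ast}\leq\sum_{i\in I}\langle T_{i}Cx,T_{i}C'x\rangle_{\mathcal{A}}$ for all $x$.

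For the lower bound I would first set aside the trivial case $K=0$, where $\langle K^{\ast}x,K^{\ast}x\rangle_{\mathcal{A}}=0$ and any lower bound works; so assume $K\neq 0$, hence $\|K\|>0$. Applying Lemma \ref{1} to $K^{\ast}\in End_{\mathcal{A}}^{\ast}(\mathcal{H})$ gives $\langle K^{\ast}x,K^{\ast}x\rangle_{\mathcal{A}}\leq\|K\|^{2}\langle x,x\rangle_{\mathcal{A}}$, equivalently $\frac{1}{\|K\|^{2}}\langle K^{\ast}x,K^{\ast}x\rangle_{\mathcal{A}}\leq\langle x,x\rangle_{\mathcal{A}}$ for every $x\in\mathcal{H}$. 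Since conjugation of an inequality between positive elements by a fixed element of $\mathcal{A}$ preserves the order, multiplying on the left by $A$ and on the right by $A^{\ast}$ and then chaining with the controlled $\ast$-operator frame lower bound yields
\[
\Big(\tfrac{1}{\|K\|}A\Big)\langle K^{\ast}x,K^{\ast}x\rangle_{\mathcal{A}}\Big(\tfrac{1}{\|K\|}A\Big)^{\ast}\leq A\langle x,x\rangle_{\mathcal{A}}A^{\ast}\leq\sum_{i\in I}\langle T_{i}Cx,T_{i}C'x\rangle_{\mathcal{A}}.
\]

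Finally I would take $A':=\|K\|^{-1}A$: because $\|K\|>0$ and $A$ is strictly nonzero, $A'$ is again a strictly nonzero element of $\mathcal{A}$, and the displayed chain together with the unchanged upper estimate shows that $\{T_{i}\}_{i\in I}$ satisfies \eqref{1} with bounds $A'$ and $B$, i.e. is a $(C,C')$-controlled $\ast$-$K$-operator frame for $End_{\mathcal{A}}^{\ast}(\mathcal{H})$. There is essentially no serious obstacle in this argument; the only points needing a little care are the order-preservation under conjugation by $A$ (and under multiplication by the positive scalar $\|K\|^{-2}$) and checking that the rescaled lower bound $A'$ remains strictly nonzero, which is why the degenerate case $K=0$ is singled out first.
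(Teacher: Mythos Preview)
Your proof is correct and follows essentially the same approach as the paper: both use Lemma~\ref{1} to bound $\langle K^{\ast}x,K^{\ast}x\rangle_{\mathcal{A}}\leq\|K\|^{2}\langle x,x\rangle_{\mathcal{A}}$, conjugate by $A$, and arrive at the new lower bound $\|K\|^{-1}A$. Your treatment is in fact slightly more careful, since you single out the degenerate case $K=0$ before dividing by $\|K\|$, whereas the paper tacitly assumes $K\neq 0$.
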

\begin{proof}
	For any $K \in End_{\mathcal{A}}^{\ast}(\mathcal{H}) $, we have,
	$$\langle K^{\ast}x,K^{\ast}x\rangle_{\mathcal{A}} \leq \|K\|^2 \langle x,x\rangle_{\mathcal{A}}.$$
	Let $\{T_{i}\}_{i\in I}$ be a $(C,C^{'})$-controlled $\ast$-operator frame for $End_{\mathcal{A}}^{\ast}(\mathcal{H})$ with bounds A and B.\\
	Then, 
	$$A\langle x,x\rangle_{\mathcal{A}} A^{\ast}\leq\sum_{i\in I}\langle T_{i}Cx,T_{i}C^{'}x\rangle_{\mathcal{A}}\leq B\langle x,x\rangle_{\mathcal{A}}B^{\ast},  x\in\mathcal{H}.$$
	Hence, 
	$$A\|K\|^{-2} \langle K^{\ast}x,K^{\ast}x\rangle_{\mathcal{A}} A^{\ast}\leq\sum_{i\in I}\langle T_{i}Cx,T_{i}C^{'}x\rangle_{\mathcal{A}}\leq B\langle x,x\rangle_{\mathcal{A}}B^{\ast},  x\in\mathcal{H}.$$
	Thus 
	$$A\|K\|^{-1} \langle K^{\ast}x,K^{\ast}x\rangle_{\mathcal{A}} (A\|K\|^{-1})^{\ast}\leq\sum_{i\in I}\langle T_{i}Cx,T_{i}C^{'}x\rangle_{\mathcal{A}}\leq B\langle x,x\rangle_{\mathcal{A}}B^{\ast},  x\in\mathcal{H}.$$
	Therefore, $\{T_{i}\}_{i\in I}$  is a $(C,C^{'})$-controlled $\ast$ - K-operator frame for $End_{\mathcal{A}}^{\ast}(\mathcal{H})$ with bounds $A\|K\|^{-1}$ and B.
\end{proof}
\begin{proposition}
	Let $\{T_{i}\}_{i\in I}$ be a $(C,C^{'})$-controlled $\ast$-K-operator frame for $End_{\mathcal{A}}^{\ast}(\mathcal{H})$. If K is surjective then $\{T_{i}\}_{i\in I}$ is a $(C,C^{'})$-controlled $\ast$-operator frame for $End_{\mathcal{A}}^{\ast}(\mathcal{H})$.
\end{proposition}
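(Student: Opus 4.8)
The plan is to leave the upper frame bound untouched and to upgrade the lower bound by exploiting the surjectivity of $K$ through Lemma \ref{111}.

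Since $\{T_{i}\}_{i\in I}$ is a $(C,C^{'})$-controlled $\ast$-$K$-operator frame, there exist strictly non zero $A,B\in\mathcal{A}$ with
\[
A\langle K^{\ast}x,K^{\ast}x\rangle_{\mathcal{A}}A^{\ast}\leq\sum_{i\in I}\langle T_{i}Cx,T_{i}C^{'}x\rangle_{\mathcal{A}}\leq B\langle x,x\rangle_{\mathcal{A}}B^{\ast},\qquad x\in\mathcal{H}.
\]
The right-hand inequality already furnishes the upper bound $B$, so nothing has to be done there. For the lower bound, because $K$ is surjective, Lemma \ref{111} applied to $T=K$ yields a constant $m'>0$ such that $\langle K^{\ast}x,K^{\ast}x\rangle_{\mathcal{A}}\geq m'\langle x,x\rangle_{\mathcal{A}}$ for all $x\in\mathcal{H}$. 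Recalling that the map $a\mapsto AaA^{\ast}$ is positive, hence order preserving, on the $C^{\ast}$-algebra $\mathcal{A}$, conjugating this estimate by $A$ gives
\[
m'\,A\langle x,x\rangle_{\mathcal{A}}A^{\ast}\leq A\langle K^{\ast}x,K^{\ast}x\rangle_{\mathcal{A}}A^{\ast}\leq\sum_{i\in I}\langle T_{i}Cx,T_{i}C^{'}x\rangle_{\mathcal{A}},\qquad x\in\mathcal{H}.
\]

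To finish, I would rewrite $m'\,A\langle x,x\rangle_{\mathcal{A}}A^{\ast}=(\sqrt{m'}\,A)\langle x,x\rangle_{\mathcal{A}}(\sqrt{m'}\,A)^{\ast}$, noting that $\sqrt{m'}\,A$ is still a strictly non zero element of $\mathcal{A}$ since $m'>0$ and $A$ is strictly non zero. Combining with the upper bound yields
\[
(\sqrt{m'}\,A)\langle x,x\rangle_{\mathcal{A}}(\sqrt{m'}\,A)^{\ast}\leq\sum_{i\in I}\langle T_{i}Cx,T_{i}C^{'}x\rangle_{\mathcal{A}}\leq B\langle x,x\rangle_{\mathcal{A}}B^{\ast},\qquad x\in\mathcal{H},
\]
which is precisely the assertion that $\{T_{i}\}_{i\in I}$ is a $(C,C^{'})$-controlled $\ast$-operator frame for $End_{\mathcal{A}}^{\ast}(\mathcal{H})$, with bounds $\sqrt{m'}\,A$ and $B$. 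The only point requiring a moment's care is the step passing from the inequality for $\langle K^{\ast}x,K^{\ast}x\rangle_{\mathcal{A}}$ to its conjugate by $A$; this is routine once one invokes that $a\mapsto AaA^{\ast}$ preserves the order on a $C^{\ast}$-algebra, so I do not expect any genuine obstacle here.
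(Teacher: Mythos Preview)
Your proof is correct and follows essentially the same route as the paper: invoke Lemma~\ref{111} to obtain $\langle K^{\ast}x,K^{\ast}x\rangle_{\mathcal{A}}\geq m'\langle x,x\rangle_{\mathcal{A}}$, conjugate by $A$, and absorb $\sqrt{m'}$ into the lower bound. Your explicit remark that $a\mapsto AaA^{\ast}$ is order preserving is a nice touch the paper leaves implicit.
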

\begin{proof}
	Suppose that K is surjective, from lemma \ref{111}  there exists $m>0$ such that
	
\begin{equation} \label{do11}
      \langle K^{\ast}x,K^{\ast}x\rangle_\mathcal{A}\geq m\langle x,x\rangle_\mathcal{A} , x\in\mathcal{H}
\end{equation}
   Let $\{T_{i}\}_{i\in I}$ be a $(C,C^{'})$-controlled $\ast$-K-operator frame for $End_{\mathcal{A}}^{\ast}(\mathcal{H})$ with bounds A and B, then, 
\begin{equation} \label{do12}
    A\langle K^{\ast}x,K^{\ast}x\rangle_{\mathcal{A}}A^{\ast} \leq\sum_{i\in I}\langle T_{i}Cx,T_{i}C^{'}x\rangle_{\mathcal{A}}\leq B\langle x,x\rangle_{\mathcal{A}}B^{\ast},  x\in\mathcal{H}.
\end{equation}
    Using (\ref{do11})	and (\ref{do12}), we have
    $$A m\langle x,x\rangle_{\mathcal{A}} A^{\ast}\leq\sum_{i\in I}\langle T_{i}Cx,T_{i}C^{'}x\rangle_{\mathcal{A}}\leq B\langle x,x\rangle_{\mathcal{A}}B^{\ast},  x\in\mathcal{H}.$$
    Then
    $$A \sqrt{m}\langle x,x\rangle_{\mathcal{A}} (A\sqrt{m})^{\ast}\leq\sum_{i\in I}\langle T_{i}Cx,T_{i}C^{'}x\rangle_{\mathcal{A}}\leq B\langle x,x\rangle_{\mathcal{A}}B^{\ast},  x\in\mathcal{H}.$$
    Therefore  $\{T_{i}\}_{i\in I}$ is a $(C,C^{'})$-controlled $\ast$-operator frame for $End_{\mathcal{A}}^{\ast}(\mathcal{H})$.
\end{proof}
\begin{proposition}
	Let  $C,C^{'} \in GL^{+}(\mathcal{H})$ and $\{T_{i}\}_{i\in I}$ be a $\ast$- K-operator frame for $End_{\mathcal{A}}^{\ast}(\mathcal{H})$. Assume that $C$ and $C'$ commute with $T_i$ for each $i\in I$ and commute with $K$. Then $\{T_{i}\}_{i\in I}$ is a $(C,C^{'})$-controlled $\ast$-K-operator frame for $End_{\mathcal{A}}^{\ast}(\mathcal{H})$.
	
\end{proposition}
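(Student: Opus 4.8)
The plan is to pull the $\ast$-$K$-operator frame inequality for $\{T_{i}\}_{i\in I}$ back through the control operators, using that $C$ and $C'$ are positive, invertible, and commute with the $T_{i}$ and with $K$. Writing $A,B$ for the (strictly non-zero) frame bounds of $\{T_{i}\}_{i\in I}$, we have for all $y\in\mathcal{H}$
\[
A\langle K^{\ast}y,K^{\ast}y\rangle_{\mathcal{A}}A^{\ast}\le\sum_{i\in I}\langle T_{i}y,T_{i}y\rangle_{\mathcal{A}}\le B\langle y,y\rangle_{\mathcal{A}}B^{\ast}.
\]
Since $T_{i}C=CT_{i}$ and $T_{i}C'=C'T_{i}$, and $C=C^{\ast}$, the controlled sum rewrites as
\[
\sum_{i\in I}\langle T_{i}Cx,T_{i}C'x\rangle_{\mathcal{A}}=\sum_{i\in I}\langle CT_{i}x,C'T_{i}x\rangle_{\mathcal{A}}=\sum_{i\in I}\langle T_{i}x,\,CC'T_{i}x\rangle_{\mathcal{A}}.
\]
As is customary in the controlled framework $C$ and $C'$ commute, so $CC'\in GL^{+}(\mathcal{H})$ and $S:=(CC')^{1/2}\in GL^{+}(\mathcal{H})$ is well defined; moreover $CC'$, and hence $S$, commutes with every $T_{i}$ and with $K$ (so also with $K^{\ast}$) by the continuous functional calculus. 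Therefore $\langle T_{i}x,CC'T_{i}x\rangle_{\mathcal{A}}=\langle ST_{i}x,ST_{i}x\rangle_{\mathcal{A}}=\langle T_{i}(Sx),T_{i}(Sx)\rangle_{\mathcal{A}}$, and so
\[
\sum_{i\in I}\langle T_{i}Cx,T_{i}C'x\rangle_{\mathcal{A}}=\sum_{i\in I}\langle T_{i}(Sx),T_{i}(Sx)\rangle_{\mathcal{A}},\qquad x\in\mathcal{H}.
\]

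Next I would specialise the frame inequality to $y=Sx$. For the upper estimate, the inequality $\langle Tx,Tx\rangle_{\mathcal{A}}\le\|T\|^{2}\langle x,x\rangle_{\mathcal{A}}$ (Lemma \ref{1}) gives $\langle Sx,Sx\rangle_{\mathcal{A}}\le\|S\|^{2}\langle x,x\rangle_{\mathcal{A}}=\|CC'\|\langle x,x\rangle_{\mathcal{A}}$, whence
\[
\sum_{i\in I}\langle T_{i}Cx,T_{i}C'x\rangle_{\mathcal{A}}\le\|CC'\|\,B\langle x,x\rangle_{\mathcal{A}}B^{\ast}=B'\langle x,x\rangle_{\mathcal{A}}(B')^{\ast},\qquad B':=\|CC'\|^{1/2}B.
\]
For the lower estimate, $S$ commutes with $K^{\ast}$, so $K^{\ast}Sx=SK^{\ast}x$; applying Lemma \ref{1} to $S^{-1}$ gives $\langle z,z\rangle_{\mathcal{A}}\le\|S^{-1}\|^{2}\langle Sz,Sz\rangle_{\mathcal{A}}$ for every $z\in\mathcal{H}$, i.e. $\langle Sz,Sz\rangle_{\mathcal{A}}\ge\|(CC')^{-1}\|^{-1}\langle z,z\rangle_{\mathcal{A}}$ (since $\|S^{-1}\|^{2}=\|(CC')^{-1}\|$). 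Taking $z=K^{\ast}x$ and conjugating by $A$ (which preserves the order in $\mathcal{A}$) we obtain
\[
\sum_{i\in I}\langle T_{i}Cx,T_{i}C'x\rangle_{\mathcal{A}}\ge A\langle SK^{\ast}x,SK^{\ast}x\rangle_{\mathcal{A}}A^{\ast}\ge A'\langle K^{\ast}x,K^{\ast}x\rangle_{\mathcal{A}}(A')^{\ast},\qquad A':=\|(CC')^{-1}\|^{-1/2}A.
\]

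Since $A'$ and $B'$ are non-zero scalar multiples of the strictly non-zero elements $A,B\in\mathcal{A}$, they are themselves strictly non-zero, and the two displays above are precisely the defining inequalities of a $(C,C')$-controlled $\ast$-$K$-operator frame for $End_{\mathcal{A}}^{\ast}(\mathcal{H})$ with bounds $A'$ and $B'$. The only delicate point — the main obstacle — is the passage to the square root $S=(CC')^{1/2}$: one must know that $CC'$ is a positive operator (which is why the commutativity of $C$ and $C'$ is used) and that $S$ inherits the commutation relations with the $T_{i}$ and with $K$ from $CC'$, which is where the continuous functional calculus enters; once this is granted, everything else is a routine repetition of the estimates already used in the preceding propositions.
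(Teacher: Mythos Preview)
Your argument is correct and follows essentially the same route as the paper: rewrite the controlled sum as $\sum_{i}\langle T_{i}(CC')^{1/2}x,T_{i}(CC')^{1/2}x\rangle_{\mathcal{A}}$, apply the $\ast$-$K$-frame inequality at $y=(CC')^{1/2}x$, and then bound $\langle (CC')^{1/2}x,(CC')^{1/2}x\rangle_{\mathcal{A}}$ above and $\langle (CC')^{1/2}K^{\ast}x,(CC')^{1/2}K^{\ast}x\rangle_{\mathcal{A}}$ below. The only cosmetic difference is that for the lower bound the paper invokes Lemma~\ref{111} (surjectivity of $(CC')^{1/2}$ gives an abstract $m>0$), whereas you apply Lemma~\ref{1} to $S^{-1}$ and obtain the explicit constant $\|(CC')^{-1}\|^{-1}$; your version is slightly sharper and your discussion of why $(CC')^{1/2}$ exists and inherits the commutation relations is more careful than the paper's.
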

\begin{proof}
	Let $\{T_{i}\}_{i\in I}$ be a $\ast$- K-operator frame for $End_{\mathcal{A}}^{\ast}(\mathcal{H})$.\\
	Then there exist two strictly non zero elements $A$ and $B$ in $\mathcal{A}$ such that 
	\begin{equation}\label{eq33}
	A\langle K^{\ast}x,K^{\ast}x\rangle_{\mathcal{A}}A^{\ast}\leq\sum_{i\in I}\langle T_{i}x,T_{i}x\rangle_{\mathcal{A}}\leq B\langle x,x\rangle_{\mathcal{A}}B^{\ast},  x\in\mathcal{H}.
	\end{equation} 
On one hand, we have, 
\begin{align*}
    \sum_{i\in I}\langle T_{i}Cx,T_{i}C'x\rangle_{\mathcal{A}}&= \sum_{i\in I}\langle T_{i}(CC')^{\frac{1}{2}}x,T_{i}(CC')^{\frac{1}{2}}x\rangle_{\mathcal{A}}\\ 
    &\leq B\langle (CC')^{\frac{1}{2}} x,(CC')^{\frac{1}{2}} x\rangle_{\mathcal{A}}\\
    &\leq B \|(CC')^{\frac{1}{2}}\|^2 \langle x,x\rangle_{\mathcal{A}},
\end{align*}
   So, 
\begin{equation} \label{do23}
     \sum_{i\in I}\langle T_{i}Cx,T_{i}C'x\rangle_{\mathcal{A}}\leq B \|(CC')^{\frac{1}{2}}\|^2 \langle x,x\rangle_{\mathcal{A}}.
\end{equation}
   Also, we have, 
\begin{align*}
    \sum_{i\in I}\langle T_{i}(CC')^{\frac{1}{2}}x,T_{i}(CC')^{\frac{1}{2}}x\rangle_{\mathcal{A}}
    & \geq 	A\langle K^{\ast}(CC')^{\frac{1}{2}}x,K^{\ast}(CC')^{\frac{1}{2}}x\rangle_{\mathcal{A}}A^{\ast}\\ 
    &\geq 	A\langle (CC')^{\frac{1}{2}} K^{\ast}x,(CC')^{\frac{1}{2}}K^{\ast}x\rangle_{\mathcal{A}}A^{\ast}.
\end{align*} 
    Since $(CC')^{\frac{1}{2}}$ is a surjecive operator, then there exists $m>0$ such that,
\begin{equation} \label{do24}
    \langle (CC')^{\frac{1}{2}} K^{\ast}x,(CC')^{\frac{1}{2}}K^{\ast}x\rangle_{\mathcal{A}} \geq m \langle  K^{\ast}x,K^{\ast}x\rangle_{\mathcal{A}}.
\end{equation} 
    Which give,
    $$ A m \langle  K^{\ast}x,K^{\ast}x\rangle_{\mathcal{A}}A^{\ast} \leq \sum_{i\in I}\langle T_{i}Cx,T_{i}C'x\rangle_{\mathcal{A}}\leq B \|(CC')^{\frac{1}{2}}\|^2 \langle x,x\rangle_{\mathcal{A}}B^{\ast}. $$ 
    Hence
    $$ A \sqrt{m} \langle  K^{\ast}x,K^{\ast}x\rangle_{\mathcal{A}}(A\sqrt{m})^{\ast} \leq \sum_{i\in I}\langle T_{i}Cx,T_{i}C'x\rangle_{\mathcal{A}}\leq B \|(CC')^{\frac{1}{2}}\| \langle x,x\rangle_{\mathcal{A}}(B\|(CC')^{\frac{1}{2}}\|)^{\ast}. $$ 
    Therefore $\{T_{i}\}_{i\in I}$ is a $(C,C^{'})$-controlled ${\ast}$-K-operator frame for $End_{\mathcal{A}}^{\ast}(\mathcal{H})$. 
	
\end{proof}
 Let $\{T_i\}_{i\in I}$ be a $(C,C')-$controlled Bessel  ${\ast}$-operator frame for $End_\mathcal{A}^\ast (\mathcal{H})$. We assume that $C$ and $C'$ commute between them and commute with $T^{\ast}_iT_i$ for each $i\in I$.\\
    We define the operator $T_{(C,C')}$ :$ \mathcal{H} \rightarrow l^2(\mathcal{H})$
    given by $$T_{(C,C')}x=\{T_i(CC')^\frac{1}{2}x\}_{i\in I}.$$\\
    $T_{(C,C')}$ is called the analysis operator.
    The adjoint operator for $T$ is defined by $T^{\ast}_{(C,C^{'})} : l^2(\mathcal{H})  \rightarrow \mathcal{H}$ given by, $$T^{\ast}_{(C,C^{'})}(\{a_i\}_{i \in I})=\sum_{i\in I} (CC')^\frac{1}{2}T_i^{\ast}a_i$$
    is called the synthesis operator.\\
    We define the frame operator of the $(C,C^{'})$-controlled Bessel  ${\ast}$-operator frame by:
 \begin{align*}
    S_{(C,C^{'})}:&\mathcal{H}\longrightarrow \mathcal{H}\\
    &x\longrightarrow S_{(C,C^{'})}x=T_{(C,C^{'})}T^{\ast}_{(C,C^{'})}x=\sum_{i\in I}  C'T_i^{\ast}T_iCx.
\end{align*}
    It's clear to see that $S_{(C,C^{'})}$ is positive, bounded and selfadjoint.
     	
\begin{theorem}
	Let $K,Q \in End_\mathcal{A}^\ast (\mathcal{H})$ and $\{T_i\}_{i\in I}\subset End_\mathcal{A}^\ast (\mathcal{H})$ be a $(C,C')-$controlled  ${\ast}$-$K$-operator frame for $End_\mathcal{A}^\ast (\mathcal{H})$ with frame operator $S_{(C,C')}$. Suppose that $Q$ commute with $C$ , $C'$ and commute with $K$. Then $\{T_iQ\}_{i\in I}$ is a $(C,C')-$controlled  ${\ast}$- $(Q^{\ast}K)$-operator frame for $End_\mathcal{A}^\ast (\mathcal{H})$ with frame operator $S=Q^{\ast}S_{(C,C')}Q$ .
	
\end{theorem}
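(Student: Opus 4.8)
The plan is to check directly the two defining inequalities of a $(C,C')$-controlled $\ast$-$(Q^{\ast}K)$-operator frame for the family $\{T_iQ\}_{i\in I}$ and then to identify its frame operator. Everything rests on one elementary observation: since $Q$ commutes with $C$ and with $C'$, one has $T_iQCx=T_iC(Qx)$ and $T_iQC'x=T_iC'(Qx)$ for all $i\in I$ and $x\in\mathcal{H}$, hence
\begin{equation*}
\sum_{i\in I}\langle T_iQCx,\,T_iQC'x\rangle_{\mathcal{A}}=\sum_{i\in I}\langle T_iC(Qx),\,T_iC'(Qx)\rangle_{\mathcal{A}}.
\end{equation*}
In particular the left-hand series is well defined (it is the frame sum of $\{T_i\}_{i\in I}$ evaluated at $Qx$), and the whole proof reduces to applying the hypotheses on $\{T_i\}_{i\in I}$ to the vector $Qx$.

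For the lower bound I would insert $Qx$ into the lower inequality of the $(C,C')$-controlled $\ast$-$K$-operator frame $\{T_i\}_{i\in I}$ with bounds $A,B$, obtaining $A\langle K^{\ast}(Qx),K^{\ast}(Qx)\rangle_{\mathcal{A}}A^{\ast}\le\sum_{i\in I}\langle T_iC(Qx),T_iC'(Qx)\rangle_{\mathcal{A}}$, and then use the identity $(Q^{\ast}K)^{\ast}=K^{\ast}Q$ to read the left-hand side as $A\langle(Q^{\ast}K)^{\ast}x,(Q^{\ast}K)^{\ast}x\rangle_{\mathcal{A}}A^{\ast}$ (the commutation of $Q$ with $K$ is available but is not actually needed here). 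For the upper bound I would apply the upper inequality at $Qx$ together with the norm estimate $\langle Qx,Qx\rangle_{\mathcal{A}}\le\|Q\|^{2}\langle x,x\rangle_{\mathcal{A}}$; since conjugation by $B$ preserves order (if $Y-X\in\mathcal{A}^{+}$ then $B(Y-X)B^{\ast}\in\mathcal{A}^{+}$), this gives
\begin{equation*}
\sum_{i\in I}\langle T_iC(Qx),\,T_iC'(Qx)\rangle_{\mathcal{A}}\le B\langle Qx,Qx\rangle_{\mathcal{A}}B^{\ast}\le(\|Q\|B)\langle x,x\rangle_{\mathcal{A}}(\|Q\|B)^{\ast}.
\end{equation*}
Together these show that $\{T_iQ\}_{i\in I}$ is a $(C,C')$-controlled $\ast$-$(Q^{\ast}K)$-operator frame for $End_{\mathcal{A}}^{\ast}(\mathcal{H})$ with lower bound $A$ and upper bound $\|Q\|B$.

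Finally I would identify the frame operator $S$ of $\{T_iQ\}_{i\in I}$. Using the description $S_{(C,C')}x=\sum_{i\in I}C'T_i^{\ast}T_iCx$ recalled before the theorem, one gets $Sx=\sum_{i\in I}C'(T_iQ)^{\ast}(T_iQ)Cx=\sum_{i\in I}C'Q^{\ast}T_i^{\ast}T_iQCx$. Since $C$ and $C'$ are self-adjoint and commute with $Q$, taking adjoints in $QC=CQ$ and $QC'=C'Q$ shows they also commute with $Q^{\ast}$; moving $Q^{\ast}$ to the far left and $Q$ to the far right then yields $Sx=Q^{\ast}\big(\sum_{i\in I}C'T_i^{\ast}T_iC\big)Qx=Q^{\ast}S_{(C,C')}Qx$, i.e.\ $S=Q^{\ast}S_{(C,C')}Q$. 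The argument is essentially bookkeeping; the two points that deserve care are the monotonicity of $b\mapsto b\,a\,b^{\ast}$ used in the upper bound and the passage from "$Q$ commutes with $C,C'$" to "$Q^{\ast}$ commutes with $C,C'$" (which is what makes the frame operator factor as $Q^{\ast}S_{(C,C')}Q$), and neither of these is a genuine obstacle.
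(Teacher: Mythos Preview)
Your proof is correct and follows essentially the same route as the paper: substitute $Qx$ into the frame inequalities for $\{T_i\}_{i\in I}$, use $QC=CQ$, $QC'=C'Q$ to rewrite $T_iC(Qx)=T_iQCx$, identify $K^{\ast}Qx=(Q^{\ast}K)^{\ast}x$, bound $\langle Qx,Qx\rangle_{\mathcal{A}}$ by $\|Q\|^{2}\langle x,x\rangle_{\mathcal{A}}$, and then compute the frame operator exactly as you do. Your write-up is in fact a bit more careful than the paper's, since you spell out why conjugation by $B$ preserves order and why $Q^{\ast}$ commutes with $C,C'$; your remark that commutation of $Q$ with $K$ is not actually needed for the lower bound is also correct.
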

\begin{proof}
	Suppose that $\{T_i\}_{i\in I}$  is a $(C,C')-$controlled  ${\ast}$- K-operator frame with frame bounds A and B. Then, $$A\langle K^{\ast}x,K^{\ast}x\rangle_{\mathcal{A}} A^{\ast}\leq\sum_{i\in I}\langle T_{i}Cx,T_{i}C^{'}x\rangle_{\mathcal{A}}\leq B\langle x,x\rangle_{\mathcal{A}}B^{\ast}.$$
	Hence, 
	$$A\langle K^{\ast}Qx,K^{\ast}Qx\rangle_{\mathcal{A}}A^{\ast} \leq\sum_{i\in I}\langle T_{i}CQx,T_{i}C^{'}Qx\rangle_{\mathcal{A}}\leq B\langle Qx,Qx\rangle_{\mathcal{A}}B^{\ast}.$$
	So, 
	$$A\langle (Q^{\ast}K)^{\ast}x,(Q^{\ast}K)^{\ast}x\rangle_{\mathcal{A}}A^{\ast} \leq\sum_{i\in I}\langle T_{i}QCx,T_{i}QC^{'}x\rangle_{\mathcal{A}}\leq B \|Q\|^2\langle x,x\rangle_{\mathcal{A}}B^{\ast}.$$
	Then
	$$A\langle (Q^{\ast}K)^{\ast}x,(Q^{\ast}K)^{\ast}x\rangle_{\mathcal{A}}A^{\ast} \leq\sum_{i\in I}\langle T_{i}QCx,T_{i}QC^{'}x\rangle_{\mathcal{A}}\leq B \|Q\|\langle x,x\rangle_{\mathcal{A}}(B\|Q\|)^{\ast}.$$
	Therefore $\{T_iQ\}_{i\in I}$ is a $(C,C')$-controlled  ${\ast}$-$(Q^{\ast}K)$-operator frame for $End_\mathcal{A}^\ast (\mathcal{H})$ with bounds A and $B \|Q\|$.\\
	Furthermore,
\begin{align*}
   S= \sum_{i\in I}  C'(T_iQ)^{\ast}T_iQCx&=\sum_{i\in I}  C'Q^{\ast}T_i^{\ast}T_iCQx=Q^{\ast}\sum_{i\in I}  C'T_i^{\ast}T_iCQx\\
    &=Q^{\ast}S_{(C,C')}Qx
\end{align*}
	\end{proof}

\begin{theorem}
	Let $\{T_i\}_{i\in I}$ be a $(C,C')-$controlled  $\ast$-K-operator frame for $End_\mathcal{A}^\ast (\mathcal{H})$ with best frame bounds A and B. If  $Q : \mathcal{H}  \rightarrow \mathcal{H}$
	is an adjointable and invertible operator such that $Q$ and $Q^{-1}$ commutes with $C$ and $C^{'}$ for each $i\in I$ and $Q^{-1}$ commute with $K^{\ast}$, then $\{T_iQ\}_{i\in I}$ is a $(C,C')-$controlled  $\ast$-K-operator frame for $End_\mathcal{A}^\ast (\mathcal{H})$ with best frame bounds M and N satisfying the inequalities,

\begin{equation}\label{eq10}
A\|Q^{-1}\|^{-1}\leq M \leq A\|Q\| \qquad and \qquad A\|Q^{-1}\|^{-1}\leq N \leq B\|Q\|.
\end{equation}

\end{theorem}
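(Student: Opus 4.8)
The plan is to verify the controlled $\ast$-$K$-frame inequalities for $\{T_iQ\}_{i\in I}$ by hand, and then to read off the four estimates in \eqref{eq10} by running the construction both forward (from $\{T_i\}_{i\in I}$ to $\{T_iQ\}_{i\in I}$) and backward (from $\{T_iQ\}_{i\in I}$ back to $\{T_i\}_{i\in I}$, with $Q^{-1}$ in the role of $Q$).

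\textbf{Step 1: $\{T_iQ\}_{i\in I}$ is a $(C,C')$-controlled $\ast$-$K$-operator frame.} Since $Q$ commutes with $C$ and $C'$, for every $x\in\mathcal{H}$ we have
\[
\sum_{i\in I}\langle T_iQCx,T_iQC'x\rangle_{\mathcal{A}}
=\sum_{i\in I}\langle T_iC(Qx),T_iC'(Qx)\rangle_{\mathcal{A}},
\]
and applying the defining inequality of the frame $\{T_i\}_{i\in I}$ at the vector $Qx$ gives
\[
A\langle K^{\ast}Qx,K^{\ast}Qx\rangle_{\mathcal{A}}A^{\ast}
\leq\sum_{i\in I}\langle T_iQCx,T_iQC'x\rangle_{\mathcal{A}}
\leq B\langle Qx,Qx\rangle_{\mathcal{A}}B^{\ast}.
\]
For the upper estimate, Lemma~\ref{1} yields $\langle Qx,Qx\rangle_{\mathcal{A}}\leq\|Q\|^{2}\langle x,x\rangle_{\mathcal{A}}$, so the right-hand side is at most $(B\|Q\|)\langle x,x\rangle_{\mathcal{A}}(B\|Q\|)^{\ast}$. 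For the lower estimate, the relation $Q^{-1}K^{\ast}=K^{\ast}Q^{-1}$ gives $K^{\ast}Q=QK^{\ast}$ after multiplying by $Q$ on the left and on the right; writing $K^{\ast}x=Q^{-1}(QK^{\ast}x)$ and using Lemma~\ref{1} again,
\[
\langle K^{\ast}x,K^{\ast}x\rangle_{\mathcal{A}}
\leq\|Q^{-1}\|^{2}\langle QK^{\ast}x,QK^{\ast}x\rangle_{\mathcal{A}}
=\|Q^{-1}\|^{2}\langle K^{\ast}Qx,K^{\ast}Qx\rangle_{\mathcal{A}},
\]
so the left-hand side dominates $(A\|Q^{-1}\|^{-1})\langle K^{\ast}x,K^{\ast}x\rangle_{\mathcal{A}}(A\|Q^{-1}\|^{-1})^{\ast}$. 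Hence $\{T_iQ\}_{i\in I}$ is a $(C,C')$-controlled $\ast$-$K$-operator frame with bounds $A\|Q^{-1}\|^{-1}$ and $B\|Q\|$.

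\textbf{Step 2: two of the four inequalities.} Because $M$ and $N$ are the best (largest valid lower, smallest valid upper) bounds for $\{T_iQ\}_{i\in I}$, Step~1 gives at once $A\|Q^{-1}\|^{-1}\leq M$ and $N\leq B\|Q\|$.

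\textbf{Step 3: the remaining two, by symmetry.} The operator $Q^{-1}$ is adjointable and invertible, commutes with $C$ and $C'$, and has inverse $Q$, which commutes with $C$, $C'$ and with $K^{\ast}$. Applying Step~1 to the frame $\{T_iQ\}_{i\in I}$ (with best bounds $M,N$) and the operator $Q^{-1}$, the family $\{(T_iQ)Q^{-1}\}_{i\in I}=\{T_i\}_{i\in I}$ is a $(C,C')$-controlled $\ast$-$K$-operator frame with bounds $M\|Q\|^{-1}$ and $N\|Q^{-1}\|$. Since $A$ and $B$ are the best bounds for $\{T_i\}_{i\in I}$, this forces $M\|Q\|^{-1}\leq A$ and $B\leq N\|Q^{-1}\|$, i.e. $M\leq A\|Q\|$ and $B\|Q^{-1}\|^{-1}\leq N$; together with $A\leq B$ the latter yields $A\|Q^{-1}\|^{-1}\leq N$. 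Combining with Step~2 gives \eqref{eq10}.

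\textbf{Expected obstacle.} The algebra is routine once commutativity is used to move $Q$ across $C$, $C'$ and $K^{\ast}$. The delicate point is Step~3: one has to be careful that passing to the "best" bounds and then scaling by $\|Q\|^{\pm1}$ and $\|Q^{-1}\|^{\pm1}$ interacts correctly with the order on $\mathcal{A}^{+}$, and that the comparison $A\leq B$ of the original frame bounds is legitimately available for the last inequality (otherwise one only obtains the cleaner bound $B\|Q^{-1}\|^{-1}\leq N$).
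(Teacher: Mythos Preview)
Your argument is essentially the paper's own: establish bounds $A\|Q^{-1}\|^{-1}$ and $B\|Q\|$ for $\{T_iQ\}_{i\in I}$ by evaluating the frame inequality of $\{T_i\}_{i\in I}$ at $Qx$ and using Lemma~\ref{1}, then reverse the roles (the paper does this by evaluating the frame inequality of $\{T_iQ\}_{i\in I}$ at $Q^{-1}x$, which is exactly your ``apply Step~1 with $Q^{-1}$'') to obtain $M\|Q\|^{-1}\le A$ and $B\le N\|Q^{-1}\|$. Your observation in the final paragraph is on point: both your proof and the paper's yield $B\|Q^{-1}\|^{-1}\le N$, not the stated $A\|Q^{-1}\|^{-1}\le N$, so the lower bound on $N$ in \eqref{eq10} is almost certainly a typo for $B\|Q^{-1}\|^{-1}$; the paper does not address this discrepancy, and your appeal to ``$A\le B$'' is not needed (nor clearly meaningful for $\mathcal{A}$-valued best bounds).
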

\begin{proof}
	Let $\{T_i\}_{i\in I}$ be a $(C,C')-$controlled  $\ast$-K-operator frame for $End_\mathcal{A}^\ast (\mathcal{H})$ with best frame bounds A and B.
	
	One one hand, we have
	\begin{align*}
\sum_{i\in I}\langle T_{i}CQx,T_{i}C^{'}Qx\rangle_{\mathcal{A}}&\leq B\langle Qx,Qx\rangle_{\mathcal{A}}B^{\ast}\\
&\leq B \|Q\|^2\langle x,x\rangle_{\mathcal{A}}B^{\ast}\\
&\leq B \|Q\|\langle x,x\rangle_{\mathcal{A}}(B\|Q\|)^{\ast}.
	\end{align*}
	 
	One the other hand, we have,
\begin{align*}
    	A\langle K^{\ast}x,K^{\ast}x\rangle_{\mathcal{A}}A^{\ast}&=A\langle K^{\ast}Q^{-1}Qx,K^{\ast}Q^{-1}Qx\rangle_{\mathcal{A}}A^{\ast}\\
    	&=A\langle Q^{-1} K^{\ast}Qx,Q^{-1} K^{\ast}Qx\rangle_{\mathcal{A}}A^{\ast}\\
    	&\leq A\| Q^{-1}\|^2\langle K^{\ast}Qx, K^{\ast}Qx\rangle_{\mathcal{A}}A^{\ast}\\
    	&\leq \| Q^{-1}\|^2 \sum_{i\in I}\langle T_{i}CQx,T_{i}C^{'}Qx\rangle_{\mathcal{A}}\\
    	&= \| Q^{-1}\|^2 \sum_{i\in I}\langle T_{i}QCx,T_{i}QC^{'}x\rangle_{\mathcal{A}}.
\end{align*}
       Hence, 
       $$A\| Q^{-1}\|^{-2}\langle K^{\ast}x,K^{\ast}x\rangle_{\mathcal{A}}A^{\ast}\leq \sum_{i\in I}\langle T_{i}QCx,T_{i}QC^{'}x\rangle_{\mathcal{A}}\leq B \|Q\|\langle x,x\rangle_{\mathcal{A}}(B\|Q\|)^{\ast}.$$
       Thus
        $$A\| Q^{-1}\|^{-1}\langle K^{\ast}x,K^{\ast}x\rangle_{\mathcal{A}}(A\| Q^{-1}\|^{-1})^{\ast}\leq \sum_{i\in I}\langle T_{i}QCx,T_{i}QC^{'}x\rangle_{\mathcal{A}}\leq B \|Q\|\langle x,x\rangle_{\mathcal{A}}(B\|Q\|)^{\ast}.$$
       Therefore, $\{T_iQ\}_{i\in I}$ is a $(C,C')-$controlled $\ast$-K-operator frame for $End_\mathcal{A}^\ast (\mathcal{H})$ with bounds $A\| Q^{-1}\|^{-1}$ and $ B \|Q\|$.\\
       Now, let M and N be the best bounds of the $(C,C')-$controlled  $\ast$- K-operator frame $\{T_iQ\}_{i\in I}$. Then,
       
\begin{equation}\label{eq8}
       A\|Q^{-1}\|^{-1} \leq M\;\;\;\;\;\; and \;\;\;\;\;\;N\leq B \|Q\| .
\end{equation}
    
       Also, $\{T_{i}Q\}_{i\in I}$ is a $(C,C')-$controlled $\ast$-$K$-operator frame for $End_{\mathcal{A}}^{\ast}(\mathcal{H})$ with frame bounds $M$ and $N$. 
       
       Since,
       $$\langle K^{\ast}x,K^{\ast}x\rangle_{\mathcal{A}} =\langle QQ^{-1}K^{\ast}x,QQ^{-1}K^{\ast}x\rangle_{\mathcal{A}} \leq\|Q\|^{2}\langle K^{\ast}Q^{-1}x,K^{\ast}Q^{-1}x\rangle_{\mathcal{A}} , x\in\mathcal{H}.$$
      So,
      \begin{align*}
      M\|Q\|^{-2}\langle K^{\ast}x,K^{\ast}x\rangle_{\mathcal{A}} M^{\ast} &\leq M\langle K^{\ast}Q^{-1}x,K^{\ast}Q^{-1}x\rangle_{\mathcal{A}}M^{\ast}  \\
      &\leq\sum_{i\in I}\langle T_{i}QCQ^{-1}x,T_{i}QC'Q^{-1}x\rangle_{\mathcal{A}} \\
      &=\sum_{i\in I}\langle T_{i}QQ^{-1}Cx,T_{i}QQ^{-1}C'x\rangle_{\mathcal{A}} \\
      &=\sum_{i\in I}\langle T_{i}Cx,T_{i}C^{'}x\rangle_{\mathcal{A}} \\
      &\leq N\|Q^{-1}\|^{2}\langle x,x\rangle_{\mathcal{A}}\\
      &\leq N\|Q^{-1}\|\langle x,x\rangle_{\mathcal{A}}(N\|Q^{-1}\|)^{\ast}.
       \end{align*}
      Since A and B are the best bounds of $(C,C')-$controlled  K-operator frame $\{T_i\}_{i\in I}$, we have 
      \begin{equation}\label{eq9}
      M\|Q\|^{-1}\leq A \;\;\;\;\;\; and \;\;\;\;\;\; B\leq N\|Q^{-1}\|.
      \end{equation}
      Therfore the inequality (\ref{eq10}) follows from (\ref{eq9})   and (\ref{eq8}).
\end{proof}

\begin{theorem}
	Let $(\mathcal{H}, \mathcal{A}, \langle .,.\rangle_{\mathcal{A}})$ and $(\mathcal{H}, \mathcal{B}, \langle .,.\rangle_{\mathcal{B}})$ be two Hilbert $C^{\ast}$-modules and let $\varphi$ $\mathcal{A} \longrightarrow \mathcal{B}$ be a $\ast$-homomorphisme and $\theta$ be a map on $\mathcal{H}$ such that $\langle \theta x,\theta y\rangle_{\mathcal{B}}=\varphi( \langle x,y\rangle_{\mathcal{A}}  $ for all $x, y \in \mathcal{H} $. Suppose $\{T_{i}\}_{i\in I} \subset End_{\mathcal{A}}^{\ast}(\mathcal{H})$ is a $(C,C^{'})$-controlled $\ast$-$K$-operator frame for $(\mathcal{H}, \mathcal{A}, \langle .,.\rangle_{\mathcal{A}})$ with frame operator $S_{\mathcal{A}}$ and lower and upper bounds A and B respectively. If $\theta$ is surjective such that  $\theta T_i=T_i \theta$ for each $i \in I$,  $\theta C= C \theta$ , $\theta C^{'}= C^{'} \theta$ and $\theta K^{\ast}= K^{\ast} \theta$, then $\{T_{i}\}_{i\in I}$ is a $(C,C^{'})$-controlled $\ast$-$K$-operator frame for $(\mathcal{H}, \mathcal{B}, \langle .,.\rangle_{\mathcal{B}})$ with frame operator $S_{\mathcal{B}}$  and lower and upper  bounds $\varphi (A)$,  $\varphi (B)$ respectively and  $\langle S_{\mathcal{B}}\theta x, \theta y\rangle_{\mathcal{B}}= \varphi ( \langle S_{\mathcal{A}} x,  y\rangle_{\mathcal{A}} $.
	
\end{theorem}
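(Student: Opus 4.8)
The plan is to push the controlled frame inequality forward through $\varphi$, term by term, and then use surjectivity of $\theta$ to recognise the result as a frame inequality for the $\mathcal{B}$-valued inner product. First I would record the routine preliminaries. Since $\varphi$ is a $\ast$-homomorphism of $C^{\ast}$-algebras it is positive (hence order preserving), multiplicative, adjoint preserving and contractive; in particular it is norm continuous, so it may be interchanged with the norm-convergent series $\sum_{i\in I}\langle T_{i}Cx,T_{i}C'x\rangle_{\mathcal{A}}$. The map $\theta$ is also contractive, because $\|\theta x\|^{2}=\|\varphi(\langle x,x\rangle_{\mathcal{A}})\|\le\|\langle x,x\rangle_{\mathcal{A}}\|=\|x\|^{2}$, hence continuous as well; and from $\langle\theta u,\theta v\rangle_{\mathcal{B}}=\varphi(\langle u,v\rangle_{\mathcal{A}})$ together with the surjectivity of $\theta$ and the positive definiteness of $\langle\cdot,\cdot\rangle_{\mathcal{B}}$, one checks that $T_{i},K,C,C'$ behave as bona fide adjointable operators in the $\mathcal{B}$-structure (their $\mathcal{B}$-adjoints agreeing with the $\mathcal{A}$-adjoints), although the argument below is arranged so as to avoid needing this explicitly.

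Next, apply $\varphi$ to the defining double inequality of the $(C,C')$-controlled $\ast$-$K$-operator frame $\{T_{i}\}_{i\in I}$ for $(\mathcal{H},\mathcal{A},\langle\cdot,\cdot\rangle_{\mathcal{A}})$. Using positivity, multiplicativity and continuity of $\varphi$ we get, for every $x\in\mathcal{H}$,
$$\varphi(A)\,\varphi(\langle K^{\ast}x,K^{\ast}x\rangle_{\mathcal{A}})\,\varphi(A)^{\ast}\leq\sum_{i\in I}\varphi(\langle T_{i}Cx,T_{i}C'x\rangle_{\mathcal{A}})\leq\varphi(B)\,\varphi(\langle x,x\rangle_{\mathcal{A}})\,\varphi(B)^{\ast}.$$
Now substitute $\langle\theta u,\theta v\rangle_{\mathcal{B}}=\varphi(\langle u,v\rangle_{\mathcal{A}})$ in each term and move $\theta$ through $K^{\ast},T_{i},C,C'$ by the hypotheses $\theta K^{\ast}=K^{\ast}\theta$, $\theta T_{i}=T_{i}\theta$, $\theta C=C\theta$, $\theta C'=C'\theta$; the three pieces become $\langle K^{\ast}\theta x,K^{\ast}\theta x\rangle_{\mathcal{B}}$, $\langle T_{i}C\theta x,T_{i}C'\theta x\rangle_{\mathcal{B}}$ and $\langle\theta x,\theta x\rangle_{\mathcal{B}}$. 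Since $\theta$ is onto, every $y\in\mathcal{H}$ equals $\theta x$ for some $x$, so the inequality reads
$$\varphi(A)\langle K^{\ast}y,K^{\ast}y\rangle_{\mathcal{B}}\varphi(A)^{\ast}\leq\sum_{i\in I}\langle T_{i}Cy,T_{i}C'y\rangle_{\mathcal{B}}\leq\varphi(B)\langle y,y\rangle_{\mathcal{B}}\varphi(B)^{\ast},\qquad y\in\mathcal{H},$$
which is exactly the $(C,C')$-controlled $\ast$-$K$-operator frame condition for $(\mathcal{H},\mathcal{B},\langle\cdot,\cdot\rangle_{\mathcal{B}})$ with bounds $\varphi(A)$ and $\varphi(B)$ (these remain strictly non-zero since $A,B$ are and $\varphi$ preserves the relevant algebraic structure).

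For the frame-operator statement I would use that a controlled frame operator is characterised by $\langle S_{(C,C')}w,z\rangle=\sum_{i\in I}\langle T_{i}Cw,T_{i}C'z\rangle$, which follows from $S_{(C,C')}=\sum_{i\in I}C'T_{i}^{\ast}T_{i}C$ and the selfadjointness of $C'$ (and never mentions an adjoint of $T_{i}$ again). Hence, using the commutation hypotheses, then $\langle\theta u,\theta v\rangle_{\mathcal{B}}=\varphi(\langle u,v\rangle_{\mathcal{A}})$ termwise, and then continuity of $\varphi$,
\begin{align*}
\langle S_{\mathcal{B}}\theta x,\theta y\rangle_{\mathcal{B}}&=\sum_{i\in I}\langle T_{i}C\theta x,T_{i}C'\theta y\rangle_{\mathcal{B}}=\sum_{i\in I}\langle\theta T_{i}Cx,\theta T_{i}C'y\rangle_{\mathcal{B}}\\
&=\sum_{i\in I}\varphi(\langle T_{i}Cx,T_{i}C'y\rangle_{\mathcal{A}})=\varphi\Big(\sum_{i\in I}\langle T_{i}Cx,T_{i}C'y\rangle_{\mathcal{A}}\Big)=\varphi(\langle S_{\mathcal{A}}x,y\rangle_{\mathcal{A}})
\end{align*}
for all $x,y\in\mathcal{H}$, which is the asserted identity.

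The only genuine obstacle is the bookkeeping between the two inner-product (and module) structures carried by the same underlying space $\mathcal{H}$: one must be sure that $T_{i},K,C,C'$, a priori only $\mathcal{A}$-adjointable, play the required roles in the $\mathcal{B}$-structure — this is where surjectivity of $\theta$ and positive definiteness of $\langle\cdot,\cdot\rangle_{\mathcal{B}}$ are actually used — and that $\varphi(A),\varphi(B)$ are again admissible frame bounds. Everything else is a purely formal, termwise application of $\varphi$ combined with the commutation relations, the one technical point worth spelling out being the interchange of $\varphi$ (and of $\theta$) with the infinite series, which is legitimate by their norm continuity.
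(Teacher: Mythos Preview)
Your proof is correct and follows essentially the same route as the paper: apply $\varphi$ to the $\mathcal{A}$-valued frame inequality, use multiplicativity/positivity of $\varphi$ and the intertwining relation $\langle\theta u,\theta v\rangle_{\mathcal{B}}=\varphi(\langle u,v\rangle_{\mathcal{A}})$ together with the commutation hypotheses to rewrite everything in terms of $\theta x$, and then invoke surjectivity of $\theta$; the frame-operator identity is obtained by the same termwise computation. You are in fact more careful than the paper about the technical points (continuity of $\varphi$ and $\theta$ to pass through the series, admissibility of $\varphi(A),\varphi(B)$, and the status of $T_i,K,C,C'$ in the $\mathcal{B}$-structure), none of which the paper addresses explicitly.
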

\begin{proof}
	Since $\theta $ is surjective, then for every $y \in \mathcal{H}$ there exists $x \in \mathcal{H}$ such that $\theta x= y$. Using  the definition of $(C,C^{'})$-controlled $\ast$- K-operator frame we have,
	$$ A\langle K^{\ast}x,K^{\ast}x\rangle _{\mathcal{A}} A^{\ast} \leq\sum_{i\in I}\langle T_{i}Cx,T_{i}C^{'}x\rangle\leq B\langle x,x\rangle_{\mathcal{A}} B^{\ast},  x\in\mathcal{H}.$$
	We have for all $x\in \mathcal{H}$, 
	$$\varphi (A\langle K^{\ast} x,K^{\ast}x\rangle _{\mathcal{A}} A^{\ast} )\leq \varphi (\sum_{i\in I}\langle T_{i}Cx,T_{i}C^{'}x\rangle_{\mathcal{A}})\leq \varphi (B\langle x,x\rangle_{\mathcal{A}} B^{\ast}).$$
	From the definition of $\ast$-homomorphisme we have 
	$$\varphi (A) \varphi (\langle K^{\ast} x,K^{\ast}x\rangle _{\mathcal{A}}) \varphi( A^{\ast} )\leq \varphi (\sum_{i\in I}\langle T_{i}Cx,T_{i}C^{'}x\rangle_{\mathcal{A}})\leq \varphi (B) \varphi (\langle x,x\rangle_{\mathcal{A}}) \varphi ( B^{\ast}).$$
	Using the relation betwen $\theta$ and $\varphi $ we get 
	$$\varphi (A) \langle \theta K^{\ast}x,\theta K^{\ast}x\rangle _{\mathcal{B}} (\varphi( A))^{\ast} \leq \sum_{i\in I}\langle \theta T_{i}Cx,\theta T_{i}C^{'}x\rangle_{\mathcal{B}}\leq \varphi (B)  \langle \theta x, \theta x\rangle_{\mathcal{B}}) (\varphi ( B))^{\ast}.$$
	Since $\theta T_i=T_i \theta$  ,  $\theta C= C \theta$, $\theta C^{'}= C^{'} \theta$ and $\theta K^{\ast}= K^{\ast} \theta$ we have 
	$$\varphi (A) \langle K^{\ast}\theta x,K^{\ast}\theta x\rangle _{\mathcal{B}} (\varphi( A))^{\ast} \leq \sum_{i\in I}\langle  T_{i} C\theta x, T_{i} C^{'}\theta x\rangle_{\mathcal{B}}\leq \varphi (B)  \langle \theta x, \theta x\rangle_{\mathcal{B}}) (\varphi ( B))^{\ast}.$$
	Therefore 
	$$\varphi (A) \langle K^{\ast} y,K^{\ast}y \rangle _{\mathcal{B}} (\varphi( A))^{\ast} \leq \sum_{i\in I}\langle  T_{i} C y, T_{i} C^{'}y\rangle_{\mathcal{B}}\leq \varphi (B)  \langle y, y\rangle_{\mathcal{B}}) (\varphi ( B))^{\ast},  y\in\mathcal{H}.$$
	This implies that $\{T_{i}\}_{i\in I}$ is a $(C,C^{'})$-controlled $\ast$-K-operator frame for $(\mathcal{H}, \mathcal{B}, \langle .,.\rangle_{\mathcal{B}})$ with bounds $\varphi (A)$ and $\varphi (B)$. 
	Moreover we have 
	\begin{align*}
	\varphi ( \langle S_{\mathcal{A}} x,  y\rangle_{\mathcal{A}}&= \varphi ( \langle \sum_{i\in I}  T_{i}Cx, T_{i}C^{'}y \rangle_{\mathcal{A}})\\
	&=  \sum_{i\in I}\varphi ( \langle T_{i}Cx, T_{i}C^{'}y \rangle_{\mathcal{A}})\\
	&=\sum_{i\in I} \langle \theta T_{i}Cx, \theta T_{i}C^{'}y \rangle_{\mathcal{B}}\\
	&= \sum_{i\in I} \langle  T_{i}C\theta x,  T_{i}C^{'}\theta y \rangle_{\mathcal{B}}\\
	&= \langle \sum_{i\in I}C^{'} T_{i}^{\ast} T_{i}C\theta x,  \theta y \rangle_{\mathcal{B}}\\
	&=\langle S_{\mathcal{B}} \theta x,  \theta y\rangle_{\mathcal{A}}).
	\end{align*}
	Which completes the proof.
\end{proof}
\section{\textbf{Operators preserving controlled $\ast$-$ K$-operator frame}}

\begin{proposition}
	Let $K,U \in End_\mathcal{A}^\ast (\mathcal{H})$ such that $R(U)\subset R(K)$. If  $\{T_i\}_{i\in I}$ is a $(C,C')-$controlled  $\ast$-K-operator frame for $End_\mathcal{A}^\ast (\mathcal{H})$, then $\{T_i\}_{i\in I}$ is a $(C,C')-$controlled  $\ast$-U-operator frame for $End_\mathcal{A}^\ast (\mathcal{H})$
	
\end{proposition}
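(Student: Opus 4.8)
The plan is to leave the upper bound of the frame inequality untouched --- it involves neither $K$ nor $U$ --- and to replace only the lower bound so that $U^{\ast}$ appears in place of $K^{\ast}$. First I would use the hypothesis $R(U)\subseteq R(K)$ together with Theorem \ref{3}: taking $E=F=G=\mathcal{H}$, $T=K$ and $T'=U$, the implication (4)$\Rightarrow$(3) produces an operator $D\in End_{\mathcal{A}}^{\ast}(\mathcal{H})$ with $U=KD$ (this uses, as in the hypotheses of Theorem \ref{3}, that $\overline{R(K^{\ast})}$ is orthogonally complemented). We may assume $U\neq 0$, hence $D\neq 0$, the case $U=0$ being trivial since then the lower bound holds with $A$ itself.

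From $U=KD$ we get $U^{\ast}=D^{\ast}K^{\ast}$, so by Lemma \ref{1} applied to $D^{\ast}$,
\[
\langle U^{\ast}x,U^{\ast}x\rangle_{\mathcal{A}}=\langle D^{\ast}(K^{\ast}x),D^{\ast}(K^{\ast}x)\rangle_{\mathcal{A}}\leq\|D^{\ast}\|^{2}\,\langle K^{\ast}x,K^{\ast}x\rangle_{\mathcal{A}}=\|D\|^{2}\,\langle K^{\ast}x,K^{\ast}x\rangle_{\mathcal{A}}
\]
for all $x\in\mathcal{H}$. Conjugating this inequality of positive elements of $\mathcal{A}$ by $A$ (using that $a\leq b$ in $\mathcal{A}^{+}$ implies $cac^{\ast}\leq cbc^{\ast}$ for every $c\in\mathcal{A}$, and that $\|D\|^{-1}$ is a central positive scalar) yields
\[
\left(\|D\|^{-1}A\right)\langle U^{\ast}x,U^{\ast}x\rangle_{\mathcal{A}}\left(\|D\|^{-1}A\right)^{\ast}\leq A\langle K^{\ast}x,K^{\ast}x\rangle_{\mathcal{A}}A^{\ast}.
\]

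Finally I would chain this with the lower bound of the given $(C,C')$-controlled $\ast$-$K$-operator frame, namely $A\langle K^{\ast}x,K^{\ast}x\rangle_{\mathcal{A}}A^{\ast}\leq\sum_{i\in I}\langle T_{i}Cx,T_{i}C'x\rangle_{\mathcal{A}}$, to obtain
\[
\left(\|D\|^{-1}A\right)\langle U^{\ast}x,U^{\ast}x\rangle_{\mathcal{A}}\left(\|D\|^{-1}A\right)^{\ast}\leq\sum_{i\in I}\langle T_{i}Cx,T_{i}C'x\rangle_{\mathcal{A}}\leq B\langle x,x\rangle_{\mathcal{A}}B^{\ast}.
\]
Since $\|D\|^{-1}A$ is again a strictly non-zero element of $\mathcal{A}$, this exhibits $\{T_{i}\}_{i\in I}$ as a $(C,C')$-controlled $\ast$-$U$-operator frame for $End_{\mathcal{A}}^{\ast}(\mathcal{H})$ with bounds $\|D\|^{-1}A$ and $B$, which is exactly the assertion. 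The one step that deserves care is the invocation of Theorem \ref{3}, i.e. verifying (or assuming) that $\overline{R(K^{\ast})}$ is orthogonally complemented so that $U$ factors through $K$; once that factorization is in hand, the remainder is routine manipulation of positive elements in the $C^{\ast}$-algebra.
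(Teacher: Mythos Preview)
Your proof is correct and follows essentially the same route as the paper's: both arguments invoke Theorem \ref{3} on the hypothesis $R(U)\subset R(K)$ to obtain an inequality of the form $\langle U^{\ast}x,U^{\ast}x\rangle_{\mathcal{A}}\le c\,\langle K^{\ast}x,K^{\ast}x\rangle_{\mathcal{A}}$, then conjugate by $A$ and chain with the given lower frame bound. The only cosmetic difference is that the paper uses the implication (4)$\Rightarrow$(1) of Theorem \ref{3} directly (obtaining $UU^{\ast}\le\lambda KK^{\ast}$ and hence the new lower bound $A/\sqrt{\lambda}$), whereas you pass through (4)$\Rightarrow$(3) to factor $U=KD$ and then apply Lemma \ref{1}; this amounts to the same thing with $\lambda=\|D\|^{2}$.
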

\begin{proof}
	Assume that $\{T_i\}_{i\in I}$ is a $(C,C')-$controlled  $\ast$-K-operator with bounds A and B, then 
\begin{equation}\label{haj1}
    A\langle K^{\ast}x,K^{\ast}x\rangle_{\mathcal{A}}A^{\ast} \leq\sum_{i\in I}\langle T_{i}Cx,T_{i}C^{'}x\rangle_{\mathcal{A}}\leq B \langle x,x\rangle_{\mathcal{A}}B^{\ast},  x\in\mathcal{H}.
\end{equation}

Since $R(U)\subset R(K)$ then from lemma (\ref{3}), there exists $\lambda >0 $ such that $UU^{\ast}\leq \lambda KK^{\ast}$. Using (\ref{haj1}), we have 
	$$\frac{A}{\sqrt{\lambda }}\langle U^{\ast}x,U^{\ast}x\rangle_{\mathcal{A}}(\frac{A}{\sqrt{\lambda }})^{\ast} \leq\sum_{i\in I}\langle T_{i}Cx,T_{i}C^{'}x\rangle_{\mathcal{A}}\leq B \langle x,x\rangle_{\mathcal{A}}B^{\ast},  x\in\mathcal{H}.$$
	Therefore $\{T_i\}_{i\in I}$ is a $(C,C')-$controlled  $\ast$-U-operator frame for $End_\mathcal{A}^\ast (\mathcal{H})$.
\end{proof}
\begin{theorem}
	Let $K \in End_\mathcal{A}^\ast (\mathcal{H})$ with a dense range. Let $\{T_i\}_{i\in I}$ be a $(C,C')-$controlled  $\ast$-K-operator and $U \in End_\mathcal{A}^\ast (\mathcal{H})$  have closed range and commute with C and $C^{'}$. If $\{T_iU\}_{i\in I}$ and $\{T_iU^\ast\}_{i\in I}$ are $(C,C')-$controlled  $\ast$-K-operator frame for $End_\mathcal{A}^\ast (\mathcal{H})$ then U is invertible.

\end{theorem}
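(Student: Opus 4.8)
\emph{Proof strategy.} The plan is to deduce from the two frame hypotheses that $U$ and $U^{\ast}$ are both injective, and then to invoke the structure of closed-range adjointable operators to conclude that $U$ is invertible. For the injectivity of $U$: let $x\in\mathcal{H}$ with $Ux=0$. Since $U$ commutes with $C$ and with $C'$, for every $i\in I$ we have $T_{i}UCx=T_{i}CUx=0$ and $T_{i}UC'x=T_{i}C'Ux=0$, so $\sum_{i\in I}\langle T_{i}UCx,T_{i}UC'x\rangle_{\mathcal{A}}=0$. Inserting this into the lower bound inequality of the $(C,C')$-controlled $\ast$-$K$-operator frame $\{T_{i}U\}_{i\in I}$, with lower bound $A_{1}$, gives
\[
0\le A_{1}\langle K^{\ast}x,K^{\ast}x\rangle_{\mathcal{A}}A_{1}^{\ast}\le\sum_{i\in I}\langle T_{i}UCx,T_{i}UC'x\rangle_{\mathcal{A}}=0,
\]
so $A_{1}\langle K^{\ast}x,K^{\ast}x\rangle_{\mathcal{A}}A_{1}^{\ast}=0$. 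Writing this positive element as $(A_{1}|K^{\ast}x|)(A_{1}|K^{\ast}x|)^{\ast}=0$ yields $A_{1}|K^{\ast}x|=0$, and since $A_{1}$ is strictly nonzero we obtain $\langle K^{\ast}x,K^{\ast}x\rangle_{\mathcal{A}}=0$, i.e.\ $K^{\ast}x=0$. As $R(K)$ is dense, $\ker K^{\ast}=R(K)^{\perp}=\{0\}$, hence $x=0$; thus $\ker U=\{0\}$.

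The injectivity of $U^{\ast}$ follows by the same argument applied to $\{T_{i}U^{\ast}\}_{i\in I}$: since $C,C'\in GL^{+}(\mathcal{H})$ are self-adjoint, the relations $UC=CU$ and $UC'=C'U$ force $U^{\ast}C=CU^{\ast}$ and $U^{\ast}C'=C'U^{\ast}$, so the computation above goes through with $U^{\ast}$ in place of $U$, using now that $\{T_{i}U^{\ast}\}_{i\in I}$ is a $(C,C')$-controlled $\ast$-$K$-operator frame and that $R(K)$ is dense; this gives $\ker U^{\ast}=\{0\}$.

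To finish, I would use the standard structure theory: since $U\in End_{\mathcal{A}}^{\ast}(\mathcal{H})$ has closed range, $R(U)$ is orthogonally complemented and $\mathcal{H}=R(U)\oplus\ker U^{\ast}$; because $\ker U^{\ast}=\{0\}$, this forces $R(U)=\mathcal{H}$, so $U$ is surjective. Combined with $\ker U=\{0\}$, $U$ is a bijective adjointable operator, hence invertible, its inverse being automatically bounded and adjointable with $(U^{-1})^{\ast}=(U^{\ast})^{-1}$.

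The step demanding the most care is the cancellation $A_{1}|K^{\ast}x|=0\Rightarrow\langle K^{\ast}x,K^{\ast}x\rangle_{\mathcal{A}}=0$ in the first paragraph: this is exactly where the hypothesis that the frame bound $A_{1}$ is strictly nonzero is used, and it has to be handled carefully because elements of a general $C^{\ast}$-algebra may be zero divisors. Everything else is routine, relying only on the commutation relations $UC=CU$, $UC'=C'U$, the density of $R(K)$ (equivalently $\ker K^{\ast}=\{0\}$), and the decomposition $\mathcal{H}=R(U)\oplus\ker U^{\ast}$ valid for closed-range adjointable operators.
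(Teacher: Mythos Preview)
Your argument is essentially the paper's: from the lower frame inequality for $\{T_iU\}_{i\in I}$ deduce $\ker U\subset\ker K^{\ast}=\{0\}$, repeat with $\{T_iU^{\ast}\}_{i\in I}$ to get $\ker U^{\ast}=\{0\}$, and then use the closed-range decomposition $\mathcal{H}=R(U)\oplus\ker U^{\ast}$ (the paper phrases this dually as $R(U^{\ast})=N(U)^{\perp}=\mathcal{H}$, with a couple of $U$/$U^{\ast}$ typos). Your extra remarks---that $U^{\ast}$ inherits commutation with the self-adjoint $C,C'$, and that a bijective adjointable operator is automatically boundedly invertible---only make explicit what the paper leaves implicit.

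You are right to single out the cancellation $A_{1}\lvert K^{\ast}x\rvert=0\Rightarrow K^{\ast}x=0$ as the sensitive point: the paper simply writes ``from (\ref{haj2}), $N(U)\subset N(K^{\ast})$'' and moves on, so it makes exactly the same tacit leap. As you note, for a merely ``strictly nonzero'' element of a general $C^{\ast}$-algebra this implication need not hold because of zero divisors; both your argument and the paper's therefore implicitly require the lower bound $A_{1}$ to be a non--zero-divisor (e.g.\ invertible), a hypothesis the paper does not spell out.
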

\begin{proof}
	Suppose that $\{T_iU\}_{i\in I}$ is a $(C,C')-$controlled  $\ast$-K-operator frame with bounds $A_1$ and $B_1$, then 
\begin{equation}\label{haj2}
	A_1\langle K^{\ast}x,K^{\ast}x\rangle_{\mathcal{A}}A_1^{\ast} \leq\sum_{i\in I}\langle T_{i}UCx,T_{i}UC^{'}x\rangle_{\mathcal{A}}\leq B_1 \langle x,x\rangle_{\mathcal{A}}B_1^{\ast},  x\in\mathcal{H}.
\end{equation}
    Since K have a dense range then $K^{\ast}$ is injective.\\ Hence from (\ref{haj2}), $N(U) \subset N(K^{\ast})$. Then  $U$ is injective.\\ Moreover $R(U^{\ast })=N(U)^{\perp}=\mathcal{H}$. Therefore U is surjective. \\
    Now assume that $\{T_iU^{\ast}\}_{i\in I}$ is a $(C,C')-$controlled  $\ast$-K-operator frame with bounds $A_2$ and $B_2$, then 
\begin{equation}\label{haj3}
    A_2\langle K^{\ast}x,K^{\ast}x\rangle_{\mathcal{A}}A_2^{\ast} \leq\sum_{i\in I}\langle T_{i}U^{\ast}Cx,T_{i}U^{\ast}C^{'}x\rangle_{\mathcal{A}}\leq B_2 \langle x,x\rangle_{\mathcal{A}}B_2^{\ast},  x\in\mathcal{H}.
\end{equation}
    Hence U is injective, since $N(U^{\ast})$ $\subset N(K^{\ast})$. Thus U is invertible. 
\end{proof}

ACKNOWLEDGMENTS\\
Sincere thanks goes to the valuable comments of the referees and the editors that give a step forward to the main file of the manuscript. Also the authors would like to thanks the reviewers for their valuable comments.

\bibliographystyle{amsplain}

\vspace{0.1in}

\end{document}